\newtheorem{thm}{Theorem}[section]
\newtheorem{quest}{Question}
\newtheorem*{quest*}{Question}
\newtheorem{theorem}[thm]{Theorem}
\newtheorem*{theorem*}{Theorem}
\newtheorem{cor}[thm]{Corollary}
\newtheorem{prop}[thm]{Proposition}
\newtheorem{lemma}[thm]{Lemma}
\newtheorem{defn}[thm]{Definition}
\newtheorem*{defn*}{Definition}
\theoremstyle{definition}
\newtheorem{remark}[thm]{Remark}
\newtheorem*{remark*}{Remark}
\newcommand{\calB}{{\mathcal{B}}}
\newcommand{\bbq}{{\mathbb{Q}}}
\newcommand{\bbz}{{\mathbb{Z}}}
\newcommand{\bbf}{\mathbb{F}}
\newcommand{\Gal}{\operatorname{Gal}}
\newcommand{\tors}{\operatorname{tors}}
\newcommand{\GL}[1]{{\operatorname{GL}_{2}\left({#1}\right)}}
\newcommand{\GLl}{{\operatorname{GL}_{2}\left(\mathbb{F}_{\ell}\right)}}
\newcommand{\SLl}{{\operatorname{SL}_{2}\left(\mathbb{F}_{\ell}\right)}}
\newcommand{\Fl}{\mathbb{F}_{\ell}}
\newcommand{\Fll}{\mathbb{F}_{\ell^{2}}}
\newcommand{\Flx}{\mathbb{F}_{\ell}^{\times}}
\newcommand{\Fllx}{\mathbb{F}_{\ell^{2}}^{\times}}
\newcommand{\Bl}{\mathscr{B}}
\newcommand{\Cs}{\mathscr{C}_{s}}
\newcommand{\Cns}{\mathscr{C}_{ns}}
\newcommand{\Ns}{\mathscr{N}_{s}}
\newcommand{\Nns}{\mathscr{N}_{ns}}
\newcommand{\diagexp}[1]{\operatorname{diagexp}\left({#1}\right)}
\newcommand{\diag}[1]{\operatorname{diag}\left({#1}\right)}
\newcommand{\calK}{\mathcal{K}}
\newcommand{\calG}{\mathcal{G}}
\newcommand{\calP}{\mathcal{P}}
\newcommand{\PDI}{\mathcal{PDI}}
\newcommand{\calR}{\mathcal{R}}
\newcommand{\cyc}{\operatorname{cyc}_{\ell}}
\title[Growth of Torsion Groups of Elliptic Curves]{Growth of torsion groups of elliptic curves over number fields  without rationally defined CM}
\author{Bo-Hae Im}
\address{Department of Mathematical Sciences, KAIST, 291 Daehak-ro, Yuseong-gu, Daejeon, 34141, South Korea}
\email{bhim@kaist.ac.kr}
\thanks{Bo-Hae Im was supported by Basic Science Research Program through the National Research Foundation of Korea (NRF) funded by grant funded by the Korea government (MSIT) (NRF-2023R1A2C1002385).}
\author{Hansol Kim}
\address{Department of Mathematical Sciences, KAIST, 291 Daehak-ro, Yuseong-gu, Daejeon, 34141, South Korea}
\email{jawlang@kaist.ac.kr}
\date{\today}
\subjclass[2010]{Primary: 11G05, Secondary: 14H52, 14K02.}
\keywords{elliptic curve, torsion subgroup, prime degree isogeny}
\begin{document}

\maketitle

\begin{abstract}
	For a quadratic field $\calK$ without rationally defined complex multiplication, we prove that there exists of a prime $p_{\calK}$ depending only on $\calK$ such that 
	if $d$ is a positive integer whose minimal prime divisor is greater than $p_{\calK}$, then for any extension $L/\calK$ of degree $d$ and any elliptic curve $E/\calK$, we have $E\left(L\right)_{\tors} = E\left(\calK\right)_{\tors}$. By not assuming the GRH, this is  a generalization of  the results by Genao, and Gon{\'a}lez-Jim{\'e}nez and Najman.
\end{abstract}

\section{Introduction}\label{intro}

For an elliptic curve $E$ defined over $K$, it is well known that the Mordell-Weil group  $E\left(K\right)$ of rational points of $E$ is a finitely generated abelian group (\cite[VIII.Theorem~6.7]{Sil}), i.e., $E\left(K\right)$ is isomorphic to $E\left(K\right)_{\tors} \oplus \bbz^{r_{E\left(K\right)}}$, where  $E\left(K\right)_{\tors}$ is a finite abelian group and $r_{E\left(K\right)}$ is a non-negative integer. We call $E\left(K\right)_{\tors}$ and $r_{E\left(K\right)}$ by the torsion part and the rank of $E$ over $K$, respectively. Extensive research has been conducted to determine an efficient approach for computing the rank and torsion subgroup of an elliptic curve over a number field, including the case of $\bbq$. However, no efficient method has been discovered thus far.

For the torsion parts of elliptic curves, the introduction in \cite{Gu21} provides a comprehensive overview of the relevant prior results and by referring to it,  we include here some of those that are of our interest: For an elliptic curve $E$ over $\bbq$, the torsion subgroup ${E\left(\bbq\right)}_{\tors}$ has been completely classified by Mazur (\cite[Theorem~2]{M78}). For a number field $K$ and an elliptic curve $E$ over $K$, if $L$ is an extension over $K$ of degree $4$ or of prime degree, then ${E\left(L\right)}_{\tors} $ is determined (\cite{KM88}, \cite{Najman16}, \cite{Chou16}, \cite{GJ17}, \cite{GJN20}).

There have been multiple approaches developed for studying the torsion parts of elliptic curves over $K$, and one of them is to study the structure of the Galois group  $\Gal\left(K\left(E\left[N\right]\right)/K\right)$ of the field of definition of $N$-torsion points of an elliptic curve $E/K$ over a number field $K$ via the $2$-dimensional mod-$N$ Galois representation of $\Gal\left(K\left(E\left[N\right]\right)/K\right)$. 
Serre has proved in \cite{Serre72} that for any number field $K$ and any non-CM elliptic curve $E/K$, there is a constant $C$ depending on $K$ and $E$ such that for any prime $\ell > C$, $\Gal\left(K\left(E\left[N\right]\right)/K\right)$ is isomorphic the $\GLl$. Moreover, Serre conjectured that the constant $C$ may depend only on $K$. Moreover,  for an elliptic curve $E/\bbq$, the $2$-dimensional mod-$\ell$ Galois representations of $\Gal(\bbq(E\left[\ell\right])/\bbq)$ are computed by Zywina in \cite[Sections~1.8, 1.9]{Z15} with their MAGMA codes posted in his homepage and in this result, it has been seen that for CM elliptic curves over $\bbq$, its non-trivial endomorphism algebra (over $\overline{\bbq}$) plays a crucial role in characterizing the structures of the Galois group. See \cite[Sections~7.1, 7.2]{Z15}.

Another approach to study the torsion parts of elliptic curves over $K$ is via isogeny. For any elliptic curve $E/K$ and any finite subgroup $V$ of $E\left(\overline{K}\right)$, there is a unique isogeny $\alpha: E \to E'$ defined over $\overline{K}$  with kernel $V$. Moreover, $V$ is $\Gal\left(\overline{K}/K\right)$-invariant if and only if $\alpha$ is $K$-rational (\cite[Exercise~3.13e]{Sil}). Therefore, we may identify $V$ with $\alpha$. Moreover, a $\Gal\left(\overline{K}/K\right)$-invariant finite subgroup of $E\left(\overline{K}\right)$ corresponds to a $K$-rational isogeny from $E$. Hence, for positive integers $m$ and $n$ such that $m \mid n$, the existence of a $K$-rational isogenies whose kernel is isomorphic to $\bbz/m\bbz \times \bbz/n\bbz$ is weaker than the existence of an elliptic curve over $K$ whose torsion part containing $\bbz/m\bbz \times \bbz/n\bbz$. Any $K$-rational isogeny $\alpha: E \to E'$ whose kernel is isomorphic to $\bbz/m\bbz \times \bbz/n\bbz$ factors as $\beta \circ \left[ m\right]$ for a unqiue $K$-rational isogeny $\beta: E \to E'$ and $\ker \beta$ is isomorphic to $\bbz/\frac{n}{m}\bbz$. An isogeny with the cyclic kernel is called {\it a cyclic isogeny}. Obviously, any isogeny of prime degree is cyclic. If $V$ is $\Gal\left(\overline{K}/K\right)$-invariant and cyclic, then so are all subgroups of $V$ since they are of form $\left\{ P\in V: \left[n\right] P = O \right\}$. Hence, any $K$-rational cyclic isogeny is a composition of $K$-rational cyclic isogenies of prime degree. So the torsion subgroups of elliptic curves $E/K$ can be studied by investigating whether there is a $K$-rational isogeny of prime degree. Over $\bbq$, Mazur proves \cite[Theorem~3]{M78} which proves that there are $12$ prime degrees of $\bbq$-rational isogenies.

For any finite extension $L$ of $K$, it is obvious that $E\left(L\right)_{\tors} \supseteq E\left(K\right)_{\tors}$ and $r_{E\left(L\right)} \ge r_{E\left(K\right)}$. So we might expect that the torsion parts and the ranks of elliptic curves might grow upon base change. It is  natural to ask which finite extensions $L/K$ preserve the torsion parts of elliptic curves $E/K$ without any growth upon base change from $K$ to $L$. Regarding this question, Gon{\'a}lez-Jim{\'e}nez and Najman (\cite{GJN20}) gave a partial answer for elliptic curves over $\bbq$ as the follows:
\begin{theorem*}[{\cite[Theorem~7.2.i]{GJN20}}]
	Let $d$ be a positive integer whose minimal prime divisor is greater than $7$. For any extension $L/\bbq$ of degree $d$ and any elliptic curve $E/\bbq$, $ E\left(L\right)_{\tors} = E\left(\bbq\right)_{\tors}$.
\end{theorem*}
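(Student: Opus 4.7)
I would argue by contradiction: assume $E/\bbq$ and an extension $L/\bbq$ of degree $d$ with minimal prime divisor $>7$ admit $E\left(L\right)_{\tors}\supsetneq E\left(\bbq\right)_{\tors}$, and aim to derive a contradiction by producing a point $P\in E\left(L\right)\setminus E\left(\bbq\right)$ of prime order $\ell\ge 11$. The reduction uses the hypothesis on $d$ directly: for each prime $\ell\le 7$, the degree $\left[\bbq\left(E\left[\ell^r\right]\right):\bbq\right]$ divides $\left|\operatorname{GL}_{2}\left(\bbz/\ell^r\right)\right|$, whose prime divisors are all $\le 7$; any $\ell^r$-torsion point $P'$ therefore has $\left[\bbq\left(P'\right):\bbq\right]$ dividing both this number and $d$, forcing $\left[\bbq\left(P'\right):\bbq\right]=1$ and $P'\in E\left(\bbq\right)$. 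So any new torsion in $E\left(L\right)$ must have order divisible by some $\ell\ge 11$; taking the $\ell$-primary part and applying Mazur's bound on $E\left(\bbq\right)_{\tors}$ (which has no $\ell$-torsion for $\ell\ge 11$) extracts the desired $P$ of prime order $\ell\ge 11$.

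The central structural input is the decomposition
\[
  \left[\bbq\left(P\right):\bbq\right] = d_1\cdot k,
\]
where, writing $G=\Gal\left(\overline{\bbq}/\bbq\right)$ and $H\subseteq H'\subseteq G$ for the stabilizers of $P$ and of $\left\langle P\right\rangle$ respectively, $d_1=\left[G:H'\right]$ is the degree over $\bbq$ of the field of definition $K_1$ of $\left\langle P\right\rangle$, and $k=\left[H':H\right]$ is the order of the character by which $H'/H$ acts on $\left\langle P\right\rangle$; in particular $k\mid\ell-1$. Since $d_1\cdot k\mid d$, both $d_1$ and $k$ have every prime factor $>7$, so each is either $1$ or at least $11$. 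Moreover, $\left\langle P\right\rangle$ determines a $K_1$-rational cyclic $\ell$-isogeny out of $E_{K_1}$, hence a point of $X_0\left(\ell\right)$ of degree at most $d_1$ lying above the rational $j$-invariant $j\left(E\right)$.

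I would then split on $\ell$. If $d_1=1$, Mazur's isogeny theorem restricts $\ell\in\left\{11,13,17,19,37,43,67,163\right\}$. Examining $\ell-1$ in each case, only $\ell=67$ admits a divisor $k>1$ with all prime factors $>7$ (namely $k=11$); and this residual possibility, which forces $E$ to be the CM curve with $j=-2^{15}\cdot 3\cdot 5^3\cdot 11^3$, is excluded by a direct computation of the Galois character on the kernel of the rational $67$-isogeny (whose order turns out to be divisible by $2$ or $3$). In all other cases $k=1$, so $P\in E\left(\bbq\right)$, again contradicting Mazur. If $d_1\ge 11$, I would combine the classification of low-degree points on $X_0\left(\ell\right)$ (Kenku, Mazur, Momose, and successors) with Frey's finiteness theorem applied to the linear-in-$\ell$ lower bound on the $\bbq$-gonality of $X_0\left(\ell\right)$ to reduce to finitely many triples $\left(\ell,d_1,k\right)$ with $d_1 k$ divisible only by primes $>7$, and dispatch each by a further divisibility check against $\ell-1$.

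The main obstacle is this last step. One must cross-reference explicit classification results for low-degree points on modular curves $X_0\left(\ell\right)$---available only for small $\ell$---with effective gonality bounds valid for large $\ell$, and combine these with the arithmetic constraint $k\mid\ell-1$ in order to rule out every mixed triple $\left(\ell,d_1,k\right)$. The CM regime is particularly delicate: although only finitely many $j$-invariants of CM elliptic curves over $\bbq$ occur, each admits rational isogenies of specific prime degrees ($43$, $67$, or $163$), requiring a separate analysis of the CM characters in the Borel image of $\bar\rho_{E,\ell}$.
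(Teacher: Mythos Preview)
The paper does not supply its own proof of this statement: it is quoted verbatim from \cite{GJN20} as the motivating base case, and the paper's contribution is to generalize it to quadratic fields (Theorem~\ref{quad_main}) and, conditionally, to number fields without RCM (Theorem~\ref{main}). The machinery developed here, specialized to $K=\bbq$, would yield $\mathbf{Property}~\calP(\bbq)$ for \emph{some} prime $p_{\bbq}$, but the sharp constant $7$ is not recovered in this paper.

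Regarding your proposal on its own merits: the reduction to a point $P$ of prime order $\ell\ge 11$ via the prime factorization of $\left|\GL{\bbz/\ell^{r}\bbz}\right|$ for $\ell\le 7$ is correct, and the case $d_{1}=1$ is handled properly by Mazur's isogeny theorem together with the factorizations of $\ell-1$ (leaving only the finite check at $\ell=67$). The genuine gap is the case $d_{1}\ge 11$. Your proposed appeal to gonality bounds and Frey--Faltings finiteness cannot ``reduce to finitely many triples $(\ell,d_{1},k)$'': those tools bound the number of points on $X_{0}(\ell)$ of degree at most a \emph{fixed} threshold, whereas here $d_{1}$ is unbounded a priori since the extension degree $d$ is arbitrary. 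Nothing in your sketch prevents the fiber of $X_{0}(\ell)\to X(1)$ over the rational point $j(E)$ from containing a point of some large degree $d_{1}$ with every prime factor of $d_{1}k$ exceeding $7$. What actually governs this is not the global geometry of $X_{0}(\ell)$ but the image of $\bar\rho_{E,\ell}$ in $\GLl$: the admissible $d_{1}$ are precisely the orbit lengths of that image acting on $\bbp^{1}(\Fl)$, and the $k$ are the orders of the resulting isogeny characters. The argument in \cite{GJN20}, and its generalization in this paper via Propositions~\ref{classify}, \ref{not_Bl}, and \ref{Bl}, proceed by classifying these possible images and verifying directly that every nonzero $P\in E[\ell]$ has $[\bbq(P):\bbq]$ divisible by a prime $\le 7$. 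Your outline does not engage with this image classification, and the modular-curve route you propose cannot substitute for it.
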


Motivated by {\cite[Theorem~7.2.i]{GJN20}}, the following question captures our primary attention and focus.

\begin{quest}\label{quest_org}
	Let
 $K$ be a number field. Does $K$ satisfy the following  $\mathbf{Property}~\calP\left(K\right)$?
\end{quest}
\begin{equation*}
	\begin{aligned}
	\mathbf{Property}~\calP\left(K\right):
		& \text{ There exists a prime } p_{K} \text{ depending only on } K \text{ such that any elliptic curve}\\
		& E/K \text{ satisfies } E\left(L\right)_{\tors} = E\left(K\right)_{\tors} \text{ for any extension } L/K \text{ of degree whose}  \\
		& \text{minimal prime divisor is greater than } p_{K}\text{.}
    \end{aligned}
\end{equation*}

A partial answer to Question~\ref{quest_org} is given by Genao in \cite{G22}. To describe the results in \cite{G22}, we recall the definition of RCM which stands for rationally defined complex multiplication.

\begin{defn} [{\cite[p.2]{G22}}]
    We say that a number field $K$ has RCM (rationally defined complex multiplication)  if there exists a CM elliptic curve  defined over $K$ whose endomorphism ring is $K$-rational.
\end{defn}

It is pointed out in \cite[Theorem~3]{G22} that the answer to Question~\ref{quest_org} is affirmative only for $K$ without RCM. Moreover, assuming GRH (generalized Riemann hypothesis), \cite[Theorem~1]{G22} states that the answer to Question~\ref{quest_org} is  affirmative if and only if $K$ has no RCM.

Not assuming GRH, we prove that the answer to Question~\ref{quest_org} is affirmative for quadratic fields without RCM as a generalization of the results by Genao(\cite{G22}) and Gon{\'a}lez-Jim{\'e}nez and Najman (\cite{GJN20}). We call a quadratic extension of $\bbq$    {\it a quadratic field} simply.

\begin{theorem}\label{quad_main}
	Every quadratic field $\calK$ without RCM satisfies $\mathbf{Property}~\calP\left(\calK\right)$.
\end{theorem}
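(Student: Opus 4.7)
The strategy is to argue by contradiction and analyze the Galois representations attached to $E$. Suppose $E/\calK$ and a finite extension $L/\calK$ of degree $d$ satisfy $E(L)_{\tors} \supsetneq E(\calK)_{\tors}$ with every prime divisor of $d$ exceeding $p_{\calK}$. Choose $P \in E(L) \setminus E(\calK)$, pass to an $\ell$-primary part $P_\ell$ still outside $E(\calK)$, and then by replacing $P_\ell$ with a suitable multiple reduce to the case where $P$ has prime order $\ell$. Setting $n = [\calK(P):\calK]$, we have $n > 1$, $n \mid d$, and every prime factor of $n$ exceeds $p_{\calK}$, with $n$ equal to the size of the orbit of $P$ under $G := \rho_{E,\ell}\bigl(\Gal(\overline{\calK}/\calK)\bigr) \leq \GLl$.

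I would first dispose of small primes $\ell$. Since orbit sizes of $G$ on $E[\ell]$ divide $|\GLl| = \ell(\ell-1)^{2}(\ell+1)$, every prime factor of $n$ is at most $\ell+1$. Choosing $p_{\calK} > \ell+1$ for all small $\ell$ one wishes to dispose of, the condition on $n$ becomes self-contradictory, so one is reduced to the regime $\ell > p_{\calK}$.

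For non-CM $E$ in this regime, Dickson's classification of subgroups of $\GLl$ narrows $G$ to four possibilities. If $G \subseteq \Bl$, then $E$ admits a $\calK$-rational cyclic $\ell$-isogeny; unconditionally (by results of Momose, Kamienny, Kenku, Bruin--Najman and others) the primes $\ell$ for which this can occur over any quadratic field $\calK$ are explicitly bounded, and one absorbs that bound into $p_{\calK}$. The Cartan normalizer inclusions $G \subseteq \Ns$ and $G \subseteq \Nns$ are similarly eliminated by (unconditional) results of Bilu--Parent, Bilu--Parent--Rebolledo, and their nonsplit analogues, and the exceptional subgroups are excluded for $\ell$ beyond a computable bound. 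Hence $G \supseteq \SLl$, so $G$ acts transitively on $E[\ell] \setminus \{O\}$ and $n = \ell^{2} - 1$, which is even, contradicting $p_{\calK} > 2$.

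The crux is the CM case, where the novelty of the theorem lies. Let $F = \End(E) \otimes \bbq$ be the CM field; by the non-RCM hypothesis $F \not\subseteq \calK$, so $G$ lies in the normalizer $N(C) \subseteq \GLl$ of a Cartan $C$ ($\Cs$ or $\Cns$ according to the splitting of $\ell$ in $F$), and $[G : G \cap C] = 2$. Any orbit of $G$ on $E[\ell] \setminus \{O\}$ has size dividing $|N(C)|$, whose prime factors lie in $\{2\} \cup \{q : q \mid \ell-1 \text{ or } q \mid \ell+1\}$---a set that, for arbitrary $\ell$, can contain large primes. To force a prime factor $\leq p_{\calK}$ into $n$, one exploits the explicit description of $G \cap C$ via the Hecke character of $E$ together with unconditional tools: class number bounds for imaginary quadratic fields (Watkins), control of the ramification in the compositum $\calK F/\calK$, and orbit analyses exploiting the quadratic nature of $\calK$. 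Making this effective without GRH, uniformly over all CM elliptic curves $E/\calK$, is the principal technical obstacle; the quadratic hypothesis is essential here, since it enables the concrete bounds that replace GRH in Genao's general-degree framework.
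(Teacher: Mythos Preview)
Your proposal has a genuine gap in the non-CM case and leaves the CM case essentially unproved.

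For the non-CM case, your assertion that the Cartan normalizer inclusions $G \subseteq \Ns$ and $G \subseteq \Nns$ are ``eliminated by (unconditional) results of Bilu--Parent, Bilu--Parent--Rebolledo, and their nonsplit analogues'' is not justified. Those results concern $\bbq$, not arbitrary quadratic fields; integral points on $X_{\mathrm{split}}(\ell)$ and especially $X_{\mathrm{nonsplit}}(\ell)$ over quadratic fields are not unconditionally controlled in the way you need. The nonsplit Cartan case is not even fully settled over~$\bbq$. So your case analysis does not close. For the CM case, you correctly identify it as the crux but offer only a list of tools (Hecke characters, Watkins' class number tables, ramification in $\calK F/\calK$) without an argument; this is precisely where Genao's approach needed GRH, and you have not explained how the quadratic hypothesis replaces it.

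The paper's proof takes a different route that avoids both difficulties. It does \emph{not} split into CM and non-CM cases, and it does \emph{not} try to show the Cartan normalizer cases fail to occur. Instead, it shows (Proposition~\ref{not_Bl}, using only group theory and Genao's description of inertia images) that whenever $G\left(\calB\right)$ lies in $\Ns$ or $\Nns$ but not $\Cs$, every orbit size $[K(S):K]$ is already divisible by $2$ or~$3$; since $p_{\calK}\ge 7$, this suffices. One is then reduced to the Borel case, where the Larson--Vaintrob theorem (Theorem~\ref{LV14}) forces the isogeny character to satisfy $\psi^{12}=\cyc^{6}$ in the absence of RCM, so $\ell\in\PDI_{2}(\calK)$ and $\ell$ satisfies explicit congruences mod~$36$ (Proposition~\ref{Bl}). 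Momose's unconditional Theorem~4 for quadratic fields gives finiteness of $\PDI_{2}(\calK)$, hence of $\calR(\calK)$, and Theorem~\ref{main} concludes. The point is that the paper trades ``rule out the bad subgroups'' (hard, requires rational-point results you do not have) for ``show bad subgroups already force small prime factors in every orbit'' (elementary), and then handles the remaining Borel case via $\PDI_{2}$ rather than the full $\PDI$.
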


The main ingredient of the proofs of our main results is  Proposition~\ref{woRCM_main} below  which gives the structure of the Galois group $\Gal\left( K\left(E\left[\ell\right] \right) / K\right)$ for a prime $\ell$ under certain assumptions without RCM, whose proof is given in Section~\ref{large}. 
We might expect from \cite[Theorem~7.2.i]{GJN20} that if $\mathbf{Property}~\calP\left(K\right)$ is satisfied, then such a prime  $p_{K}$ is at least $7$. Therefore, we are directed to study the Galois groups $\Gal\left( K\left(E\left[\ell\right] \right) / K\right)$ for primes $\ell$ and elliptic curves $E$ over $K$ with a point $R \in E\left[ \ell\right] - \left\{ O\right\}$ such that the degree $ \nmid \left[ K\left(R\right) : K\right]$ is not divisible by either the small primes $2$ or $3$. First, we analyze it independently of assuming the RCM, and we give a less robust result than Proposition~\ref{woRCM_main} for the purpose.
\begin{prop}\label{pre_main}
	Let $K$ be a number field. There exists a positive integer $N'_{K}$ depending only on $K$ such that  for every prime $\ell > N'_{K}$ and elliptic curve $E/K$ with a point $R\in E\left[\ell\right]-\left\{O\right\}$ such that $2,3\nmid \left[K\left(R\right):K\right]$, the Galois group $\Gal\left( K\left(E\left[\ell\right]\right)/K\right)$ is isomorphic to a subgroup of the Borel subgroup of $\GLl$.
\end{prop}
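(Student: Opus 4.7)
The plan is to argue by contradiction using Dickson's classification of subgroups of $\GLl$. Set $\rho := \rho_{E, \ell}: \Gal\left(\overline{K}/K\right) \to \GLl$ and $G := \rho\left(\Gal\left(\overline{K}/K\right)\right)$. The hypothesis $2, 3 \nmid [K\left(R\right):K]$ translates, via the orbit--stabilizer correspondence, into the existence of a $G$-orbit on $E\left[\ell\right] - \left\{O\right\} \cong \Fl^{2} - \left\{0\right\}$ whose size is coprime to $6$, and the desired conclusion is that $G$ stabilizes some line of $\Fl^{2}$, i.e., $G$ is $\GLl$-conjugate to a subgroup of a Borel.

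By Dickson's classification, for $\ell$ exceeding a constant depending only on $[K:\bbq]$ (subsuming elimination of the exceptional projective images $A_{4}, S_{4}, A_{5}$ via Serre's bounds), such a $G$ is, up to conjugacy, one of: (a) a subgroup containing $\SLl$; (b) a subgroup of a Borel; (c) a subgroup of the split-Cartan normalizer $\Ns$; or (d) a subgroup of the non-split-Cartan normalizer $\Nns$. Case (b) is the desired conclusion, and case (a) is ruled out immediately since $\SLl$ acts transitively on $\Fl^{2} - \left\{0\right\}$, forcing an orbit of size $\ell^{2} - 1$, divisible by $6$ as soon as $\ell > 3$.

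For case (c), set $C := G \cap \Cs$, so that $[G:C] \in \left\{1, 2\right\}$; the subcase $[G:C] = 1$ already puts $G$ into a Borel. In the subcase $[G:C] = 2$ the non-trivial coset swaps the two eigenlines of $\Cs$, and a case analysis of the $G$-orbit of $R$ gives: if $R$ lies on an eigenline, the orbit meets both eigenlines and so has even size; if $R$ lies off both eigenlines, the orbit has size $\left|C\right|$ or $2\left|C\right|$, of which only $\left|C\right|$ can be coprime to $6$. In this latter subcase the cyclotomic character yields $\left|\det C\right| \geq \tfrac{1}{2}[K\left(\zeta_{\ell}\right):K] \geq \tfrac{\ell - 1}{2[K:\bbq]}$, and since $\left|\det C\right|$ divides $\left|C\right|$ which is coprime to $6$, the $\left\{2, 3\right\}$-part of $\ell - 1$ is at most $2[K:\bbq]$. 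Case (d) is parallel: $\Cns$ acts freely on $\Fl^{2} - \left\{0\right\}$, so a $G$-orbit has size $\left|G\right|$ or $\left|G\right|/2$ according as an involution from $\Nns \setminus \Cns$ lies in the stabilizer of $R$ or not; coprimality of this orbit size with $6$ combined with the cyclotomic lower bound on $\left|\det G\right|$ again bounds the $\left\{2, 3\right\}$-part of $\ell - 1$ in terms of $[K:\bbq]$.

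The principal obstacle is converting these arithmetic restrictions on the $\left\{2, 3\right\}$-part of $\ell - 1$ into an outright bound $\ell \leq N'_{K}$ depending only on $K$, since for sporadic $\ell$ with small $\left\{2, 3\right\}$-part of $\ell - 1$ the elementary analysis does not close. I expect this final step to rely on effective uniformity results for $K$-rational non-cuspidal, non-CM points on the modular curves attached to $\Ns$ and $\Nns$ (in the spirit of Mazur, Momose, and Bilu--Parent--Rebolledo, adapted to the number-field setting), possibly combined with Serre-type open-image information. The orbit-and-determinant analysis above isolates exactly which pieces of the subgroup lattice of $\GLl$ must still be excluded, and the constant $N'_{K}$ is extracted from those modular-curve inputs.
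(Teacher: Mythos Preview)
Your orbit-and-determinant analysis is correct as far as it goes, and the classification step is essentially the same as the paper's Proposition~\ref{classify}. The genuine gap is exactly where you locate it: the elementary argument only bounds the $\{2,3\}$-part of $\ell-1$ by a constant depending on $[K:\bbq]$, which leaves infinitely many primes (e.g.\ all $\ell\equiv 11\pmod{12}$ when $K=\bbq$). Your proposed remedy---uniformity results for $K$-rational non-CM points on $X_{s}^{+}(\ell)$ and $X_{ns}^{+}(\ell)$---is not available for general number fields $K$; even over $\bbq$ the non-split case is only very recently (and partially) understood, and nothing of the sort is known for arbitrary $K$. Moreover, Proposition~\ref{pre_main} makes no hypothesis excluding CM, so the modular-curve route would still have to treat CM elliptic curves, for which the normalizer image genuinely occurs for every $\ell$.

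The paper closes the gap by a completely different, purely local mechanism (Proposition~\ref{not_Bl}): it invokes \cite[Theorem~2]{G22}, which describes the image of the inertia group $I_{K_{\mathfrak p}}$ at a prime $\mathfrak p\mid\ell$. The point is not the global size of $\det G$ but the fact that, once $\ell$ is unramified in $K$ and $\bbq(\zeta_\ell)\cap K=\bbq$, one has $\det\rho(I_{K_{\mathfrak p}})=\Flx$ exactly (not merely a large subgroup). When $G\subseteq\Ns$ but $G\not\subseteq\Cs$, the paper shows (with a short case analysis, the exceptional case being $\ell=19$) that $\rho(I_{K_{\mathfrak p}})\subseteq\Cs$, whence $\Delta:=G\cap\Cs$ contains an element of determinant a primitive root, forcing $2\mid\ell-1\mid|\Delta|$; in the $\Nns$ case the inertia image contains $\Cns^{e}$ for some $e\le 6$, forcing $(\ell^{2}-1)/e$ to divide every orbit size. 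This local input is what your global cyclotomic estimate $|\det C|\ge(\ell-1)/(2[K:\bbq])$ cannot supply: it upgrades an index bound to an exact surjectivity statement, and that is precisely what kills the residual case $|\Delta|$ coprime to $6$ without any appeal to rational points on modular curves.
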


Proposition~\ref{pre_main} can be put in another way as follows: Under its assumption, $E\left[\ell\right]$ has a Galois invariant $1$-dimensional $\Fl$-subspace $V$ and the pair $\left(E,V\right)$ is a non-cuspidal $K$-raional point of the modular curve $X_{0}\left(\ell\right)$. We may identify $\left(E,V\right)$ to the $K$-rational isogeny $\alpha$ from $E$ with kernel $V$ in the sense as in Section~\ref{intro}. Hence, if we let  $Y_{0}\left(\ell\right)$ be the set of all non-cuspidal points in $X_{0}\left(\ell\right)$, then $\ell$ is  a {\bf p}rime {\bf d}egree of an {\bf i}sogeny  if and only if $Y_{0}\left(\ell\right)\left(K\right) \ne \emptyset$.
Let \begin{align}\label{setP}
	\PDI \left(K\right) := \left\{ \text{a prime } \ell : Y_{0}\left(\ell\right)\left(K\right) \ne \emptyset\right\}.
\end{align}
\
Then, Question~\ref{quest_org} is related to the following question.

\begin{quest}\label{quest_famous}
	For each number field $K$, is the set $\PDI \left(K\right)$ finite?
\end{quest}

Momose gave a sufficient condition to the finiteness of the set  $\PDI \left(K\right)$ of this question as follows.

\begin{theorem*}[{\cite[Theorem~B]{M95}}]\label{M95B}
	Let $\calK$ be a quadratic field. If $\calK$ is not an imaginary quadratic field of class number $1$, then $\PDI \left(\calK\right)$ is finite.
\end{theorem*}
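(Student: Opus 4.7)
My plan is to adapt Mazur's formal immersion method, as refined for quadratic points by Kamienny and Momose. Suppose $\ell$ is a prime exceeding some effective bound depending on $\calK$, and $x = (E,V) \in Y_{0}(\ell)(\calK)$ is a non-cuspidal point; the goal is to derive a contradiction. Let $\sigma$ be the non-trivial element of $\Gal(\calK/\bbq)$, so that the effective divisor $D_{x} := x + x^{\sigma}$ is $\Gal(\calK/\bbq)$-stable and defines a $\bbq$-rational point on the symmetric square $X_{0}(\ell)^{(2)}$. Fixing a $\bbq$-rational cuspidal divisor $c_{0}$ of degree two (for example $0 + \infty$, built from the two rational cusps of $X_{0}(\ell)$), I consider the $\bbq$-rational Abel--Jacobi map
\[
    f: X_{0}(\ell)^{(2)} \longrightarrow J_{0}(\ell), \qquad D \longmapsto [D - c_{0}].
\]

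The next step is to pass to a quotient of $J_{0}(\ell)$ with controllable Mordell--Weil group over $\bbq$. By Mazur's Eisenstein-ideal machinery, supplemented by the winding-quotient construction of Kolyvagin--Logachev to cover the primes $\ell$ not handled by Mazur's original version, there is an optimal quotient $\pi: J_{0}(\ell) \twoheadrightarrow J^{e}$ whose rational Mordell--Weil group $J^{e}(\bbq)$ is finite. Hence $\pi(f(D_{x}))$ lies in a finite torsion group. Choosing a small prime $p$ of good reduction for $X_{0}(\ell)$ with $p \nmid 2\ell$, torsion injects into the reduction modulo $p$, so to prove $\pi(f(D_{x})) = 0$ it suffices to do so on the special fiber. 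An analysis of the reduction of $x$ at $p$ using Tate-curve expansions at cusps and Serre--Tate deformation theory at supersingular or ordinary $j$-invariants forces $\tilde{D}_{x}$ to lie in the cuspidal locus of $X_{0}(\ell)^{(2)}_{\bbf_{p}}$ for $\ell$ large.

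The crux is then the formal immersion criterion: $\pi \circ f$ should induce an injection on tangent spaces at $(c_{0},c_{0})$ modulo $p$. Via Mazur's identification of the cotangent space of $J^{e}$ with a subspace of weight-two cusp forms on $\Gamma_{0}(\ell)$, this translates into a linear-independence statement for their $q$-expansion coefficients modulo $p$, which is verified by the $q$-expansion principle together with Hecke-theoretic inputs. Granting the immersion, equality of reductions forces $D_{x} = c_{0}$ as $\bbq$-points of $X_{0}(\ell)^{(2)}$, contradicting the non-cuspidality of $x$. The hypothesis on $\calK$ enters precisely here: when $\calK$ is an imaginary quadratic field of class number one, a CM elliptic curve $E_{\calK}/\calK$ with $\operatorname{End}(E_{\calK}) = \cO_{\calK}$ exists, and for every rational prime $\ell$ splitting as $\mathfrak{l}\bar{\mathfrak{l}}$ in $\cO_{\calK}$ the subgroup $E_{\calK}[\mathfrak{l}]$ is a $\calK$-rational cyclic subgroup of order $\ell$; by Chebotarev this produces infinitely many elements of $\PDI(\calK)$, so no argument of this shape can possibly succeed, and the formal-immersion conclusion must genuinely fail for those fields.

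The main obstacle is carrying out the formal-immersion verification uniformly in $\ell$: a single small prime $p$ (typically $p = 2$ or $p = 3$) must work for every sufficiently large $\ell$, which demands a case analysis on the local behavior at $\ell$ of the mod-$\ell$ Galois representation attached to $E$ (Borel, normalizer of split or non-split Cartan, or exceptional image), together with a quantitative ruling-out of the CM contributions that survive when $\calK$ is not excluded. This tangent-space computation and its compatibility with the exceptional CM locus are where Momose concentrates his technical work.
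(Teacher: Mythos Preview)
The paper does not prove this statement at all: it is quoted as \cite[Theorem~B]{M95} and used as a black box, with no argument supplied. There is therefore no ``paper's own proof'' against which to compare your proposal.

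As an independent remark, your outline is broadly in the spirit of the Mazur--Kamienny--Momose circle of ideas, but it does not match the logical structure of Momose's own proof in \cite{M95}. Momose first establishes his Theorem~A (restated in the present paper as Theorem~\ref{M95A}), which classifies the isogeny character of any $(E,V)\in Y_{0}(\ell)(K)$ for large $\ell$ into a small number of types; Theorem~B for quadratic fields is then deduced by eliminating each type separately under the hypothesis that $\calK$ is not imaginary quadratic of class number one (with Type~2 handled by what the present paper quotes as \cite[Theorem~4]{M95}). The single symmetric-square formal-immersion argument you sketch is closer to Kamienny's approach to torsion, and while formal-immersion ideas do feed into the character classification, your step ``forces $\tilde{D}_{x}$ to lie in the cuspidal locus of $X_{0}(\ell)^{(2)}_{\bbf_{p}}$ for $\ell$ large'' is precisely the place where the argument does \emph{not} go through uniformly: the existence of CM points with Type~2 or Type~3 isogeny characters means the reduction need not be cuspidal, and these residual cases are exactly what Momose's type-by-type analysis is designed to handle. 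So as written your proposal is a plausible strategy sketch, but it glosses over the part of the argument where the real work (and the hypothesis on $\calK$) is actually used.
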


\begin{remark}
	Let $K$ be a number field. It is well known that $\PDI\left(K\right)$ is infinite if $K$ contains the Hilbert class field of an imaginary quadratic field. Obviously, an imaginary quadratic field has class number $1$ if and only if it is the Hilbert class field of itself. This fact points out why the condition that the base field has no RCM is crucial, again. Conversely, under GRH, $\PDI\left(K\right)$ is infinite if and only if $K$ contains the Hilbert class field of an imaginary quadratic field (see \cite[Remark~8]{M95}).
\end{remark}

\begin{remark}\label{PDI-P}
	The proofs of \cite[Theorem~1]{G22} and \cite[Theorem~3]{G22} depend on the finiteness of $\PDI\left(K\right)$. For a number field $K$ without RCM, under the GRH, if $\PDI\left(K\right)$ is finite, then $\mathbf{Property}~\calP\left(K\right)$ holds. However, $\mathbf{Property}~\calP\left(K\right)$ may not hold for $K$ with RCM in which case  $\PDI\left(K\right)$ is infinite.
\end{remark}

Moreover, for a number field $K$, Momose classified in \cite{M95} all $K$-rational isogenies of sufficiently large prime degrees in $\PDI\left(K\right)$ into three types. If $K$ does not have RCM, only two types can occur as follows.

\begin{theorem}[{\cite[Theorem~A]{M95}}]\label{M95A}
	Let $K$ be a number field without RCM. For a prime $\ell$ and $\left(E,V\right) \in Y_{0}\left(\ell\right)\left(K\right)$, the natural representation $\lambda: \Gal\left(K\left(E\left[\ell\right]\right)/K\right) \to \operatorname{GL}\left(V\right) \cong \Flx$ is called the isogeny character of $\left(E,V\right)$. There is a constant prime number $C_{K}$ dpending only on $K$ such that any $\ell > C_{K}$ is one of the following types:

  \begin{enumerate}[{label=\textbf{Type \arabic*. }, leftmargin=5\parindent}]
		\item $\lambda^{12}$ or $\left(\cyc^{-1}\lambda\right)^{12}$ is unramified.
		\item $\lambda^{12} = \cyc^{6}$ and $\ell \equiv 3 \pmod{4}$ where $\cyc$ is the $\ell$-cyclotomic character.
	\end{enumerate}
\end{theorem}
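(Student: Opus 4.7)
The plan is to study the isogeny character $\lambda \colon G_K \to \Flx$ by computing its restriction to an inertia subgroup $I_\fkp$ at every finite place $\fkp$ of $K$ and then globalizing via class field theory. The Weil pairing identity $\det \rho = \cyc$, where $\rho$ is the $\ell$-torsion representation, gives that $G_K$ acts on $E[\ell]/V$ by $\cyc\lambda^{-1}$; swapping $V$ with $E[\ell]/V$ thus swaps $\lambda$ with $\cyc\lambda^{-1}$, which is exactly the symmetry underlying the appearance of both $\lambda^{12}$ and $(\cyc^{-1}\lambda)^{12}$ in Type~1. The exponent $12$ in the conclusion will arise from the fact that every elliptic curve acquires good or multiplicative reduction after a base change of tame ramification index dividing~$12$.

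I would first dispose of the primes $\fkp \nmid \ell$. The N\'eron--Ogg--Shafarevich criterion together with the semistable reduction theorem shows that $\lambda^{12}\big|_{I_\fkp}$ is trivial when $E$ has potentially good reduction at $\fkp$, while potentially multiplicative reduction forces $\lambda\big|_{I_\fkp} \in \{1,\cyc|_{I_\fkp}\}$ via Tate uniformization. Since $\cyc$ is unramified at $\fkp \nmid \ell$, both possibilities render $\lambda^{12}$ and $(\cyc^{-1}\lambda)^{12}$ unramified at $\fkp$, so all the interesting ramification is concentrated at the primes above $\ell$.

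The main work is at the primes $\fkP \mid \ell$ of $K$, which I would analyze through Raynaud's theorem on finite flat commutative group schemes of type $(\ell,\dots,\ell)$ over $\cO_{K_\fkP}$. In the potentially ordinary case, $E[\ell]$ is, after a small base change, an extension of an \'etale by a multiplicative group scheme, which pins down $\lambda\big|_{I_\fkP} \in \{1,\cyc|_{I_\fkP}\}$ and feeds directly into Type~1. In the potentially supersingular case, the inertia action factors through the two fundamental characters $\psi_1,\psi_2$ of level~$2$; the $I_\fkP$-stability of the $G_K$-stable line $V$ forces the exponents on $\psi_1,\psi_2$ to be balanced (since Frobenius swaps them and $V$ is $\Fl$-rational), and combining with $\psi_1\psi_2 = \cyc|_{I_\fkP}$ one extracts $\lambda^{12}\big|_{I_\fkP} = \cyc^{6}\big|_{I_\fkP}$. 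The congruence $\ell \equiv 3 \pmod 4$ in Type~2 then emerges because the quadratic twist mediating this descent to $\Flx$ requires $-1$ to be a non-square mod~$\ell$.

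Globalizing, in the everywhere-non-supersingular case the local data assemble into a finite-order character ($\lambda^{12}$ or $(\cyc^{-1}\lambda)^{12}$) unramified at every finite place, which is Type~1. In the remaining case the character $\chi := \lambda^{12}\cyc^{-6}$ has prescribed ramification and finite order, and the target is to show $\chi = 1$ for $\ell > C_K$. The main obstacle, and the step that genuinely uses the absence of RCM, is precisely this uniform boundedness: the strategy is to argue that if $\chi$ were nontrivial for a sequence of primes $\ell \to \infty$, then the associated Hecke character would force $j(E)$ to specialize to a CM $j$-invariant whose associated imaginary quadratic order embeds in $K$ as a $K$-rational endomorphism ring, contradicting the no-RCM hypothesis. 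Executing this final CM extraction cleanly---translating a character-theoretic condition into a statement about $K$-rational endomorphisms---is where the difficulty really lies, and where the constant $C_K$ takes shape.
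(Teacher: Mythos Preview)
The paper does not prove this statement; it is quoted verbatim as Momose's Theorem~A \cite[Theorem~A]{M95} and used as a black box (together with the related \cite[Theorem~4]{M95} and the Larson--Vaintrob refinement \cite{LV14}). There is therefore no ``paper's own proof'' to compare against.

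That said, your sketch is a reasonable outline of the strategy Momose actually follows: local analysis of $\lambda|_{I_\fkp}$ away from $\ell$ via semistable reduction, local analysis at $\fkp \mid \ell$ via Raynaud's classification of finite flat group schemes, and then a global argument. Two points deserve sharpening. First, your explanation of how $\ell \equiv 3 \pmod{4}$ arises in Type~2 is not quite right: it is not a matter of a quadratic twist requiring $-1$ to be a non-square, but rather that the relation $\lambda^{12} = \cyc^6$ combined with the local shape of $\lambda$ at primes above $\ell$ forces a specific congruence on $\ell$ (Momose deduces this from the structure of the isogeny character on inertia in the supersingular case). Second, the final paragraph is the crux and is too vague to count as a proof: Momose's actual argument does not ``extract a CM elliptic curve from a nontrivial Hecke character'' in the way you suggest, but instead constructs an explicit rational function on $X_0(\ell)$ out of the isogeny character data and bounds its divisor, using effective Chebotarev-type estimates to produce the constant $C_K$. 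The no-RCM hypothesis enters because one of Momose's three types (his Type~3, which you have implicitly folded into your discussion) requires $K$ to contain the Hilbert class field of an imaginary quadratic field, which is exactly RCM. Your instinct that this is ``where the difficulty really lies'' is correct, but the mechanism you propose is not the one that works.
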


\begin{defn}
	For a number field $K$ without RCM, we denote  by $\PDI_{2}\left(K\right)$ the set of all primes $\ell \equiv 3 \pmod{4}$ such that there is a $K$-rational isogeny of degree $\ell$ whose isogeny character $\lambda$ satisfies that $\lambda^{12} = \cyc^{6}$.
\end{defn}

For a number field $K$ without RCM, we can elaborate on Proposition~\ref{pre_main} to provide more precise properties.
\begin{prop}\label{woRCM_main}
    Let $K$ be a number field without RCM, there exists a positive integer $N_{K}$ depending only on $K$ such that the following holds: for every prime $\ell > N_{K}$ and an elliptic curve $E/K$, if there exists  $R\in E\left[\ell\right]-\left\{O\right\}$ such that $2,3\nmid \left[K\left(R\right):K\right]$, then we have the following; \begin{enumerate}[\normalfont (a)]
		\item the Galois group $\Gal\left( K\left(E\left[\ell\right]\right)/K\right)$ is isomorphic to a subgroup of the Borel subgroup of $\GLl$ containing an unipotent element,
		\item there exists a prime factor of  $\ell-1$ which divides  $\left[ K\left(S\right) : K\right]$, for all $S\in E\left[\ell\right]-\left\{O\right\}$ and
		\item $\ell \equiv 7,11,23,31,35 \pmod{36}$ and $\ell \in \PDI_{2}\left(K\right)$.
	\end{enumerate}
\end{prop}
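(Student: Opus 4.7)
The plan is to combine Proposition~\ref{pre_main} with Momose's Theorem~\ref{M95A}, exclude Type 1 using the non-RCM hypothesis, and extract (a)--(c) from the Type 2 structure together with the coprime-to-$6$ hypothesis on $[K(R):K]$. Choose $N_{K}$ larger than $N'_{K}$ of Proposition~\ref{pre_main}, the Momose constant $C_{K}$ of Theorem~\ref{M95A}, a Merel-type bound $B([K:\bbq])$, and a threshold ensuring $[K(\zeta_{\ell}):K]=\ell-1$ for every $\ell>N_{K}$ (only finitely many exceptional $\ell$ have $K\cap\bbq(\zeta_{\ell})\ne\bbq$). For $\ell>N_{K}$, Proposition~\ref{pre_main} gives a Galois-stable line $V\subseteq E[\ell]$; write $\lambda$ for the character of $\Gal(\overline{K}/K)$ on $V$ and $\mu:=\cyc\lambda^{-1}$ for the character on $E[\ell]/V$. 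Both $(E,V)$ and the dual pair belong to $Y_{0}(\ell)(K)$. Since $\cyc^{-1}\lambda=\mu^{-1}$ and $\mu^{12}=\cyc^{12}\lambda^{-12}$, Momose's classification is symmetric in $\lambda$ and $\mu$: each is Type 1 iff the other is, and similarly for Type 2.

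The central obstacle is excluding Type 1. Under Type 1, one of $\lambda^{12},\mu^{12}$ is everywhere unramified, hence factors through $\Gal(H_{K}/K)$ with order dividing the class number $h_{K}$. The standard Gr\"ossencharacter argument of \cite{M95} then produces from the compatible system $\{\lambda_{\ell}^{12}\}_{\ell}$ an algebraic Hecke character whose existence would force a $K$-rational CM structure on $E$, contradicting the non-RCM hypothesis for $\ell$ beyond a $K$-dependent threshold which is absorbed into $N_{K}$. Hence we are in Type 2: $\ell\equiv 3\pmod 4$ and $\lambda^{12}=\mu^{12}=\cyc^{6}$, so $\ell\in\PDI_{2}(K)$.

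For the congruence in (c), after swapping $V$ with the dual line if necessary (so that $R\in V$), we have $\operatorname{ord}(\lambda)=[K(R):K]$ coprime to $6$. The identity $\operatorname{ord}(\lambda^{12})=\operatorname{ord}(\cyc^{6})$ combined with $\gcd(\operatorname{ord}(\lambda),12)=1$ yields $\operatorname{ord}(\lambda)=\operatorname{ord}(\cyc)/\gcd(\operatorname{ord}(\cyc),6)$, and coprimality of $\operatorname{ord}(\lambda)$ to $3$ forces $9\nmid\operatorname{ord}(\cyc)=\ell-1$ (by the choice of $N_{K}$). Combined with $\ell\equiv 3\pmod 4$ this gives the five residues $\ell\equiv 7,11,23,31,35\pmod{36}$. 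For (b), the relation $(\lambda\mu^{-1})^{12}=1$ shows $\operatorname{ord}(\lambda)$ and $\operatorname{ord}(\mu)$ have the same prime-to-$12$ part; Merel's theorem gives $\operatorname{ord}(\lambda)>1$ for $\ell>N_{K}$, so since $\operatorname{ord}(\lambda)$ is coprime to $6$ and nontrivial it has a prime factor $p\ge 5$, which is coprime to $12$ and hence also divides $\operatorname{ord}(\mu)$. This $p$ divides $\ell-1$ and divides $[K(S):K]$ for every nonzero $S\in E[\ell]$, since $[K(S):K]$ is a multiple of $\operatorname{ord}(\lambda)$ when $S\in V$ and of $\operatorname{ord}(\mu)$ otherwise.

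For (a), $G:=\Gal(K(E[\ell])/K)$ contains a nontrivial unipotent iff $G$ is not conjugate (within the Borel) into the split Cartan, iff $E[\ell]$ admits no second Galois-stable line $V'\ne V$. Such a $V'$ would produce two distinct points $(E,V),(E,V')\in Y_{0}(\ell)(K)$ with both isogeny characters of Type 2 and product $\cyc$; this configuration defines a $K$-rational point on the modular curve parametrizing pairs of independent cyclic $\ell$-isogenies, ruled out for $\ell>N_{K}$ by the same non-RCM Gr\"ossencharacter argument used to exclude Type 1.
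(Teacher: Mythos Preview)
Your strategy---reduce to the Borel via Proposition~\ref{pre_main} and then exploit $\lambda^{12}=\cyc^{6}$---parallels the paper's, and your arguments for (b) and (c) are essentially sound (modulo the imprecise ``swap $V$ with the dual line so that $R\in V$'': once (a) holds there is a \emph{unique} stable line, and $R$ need not lie in it; what you really use is that one of $\operatorname{ord}(\lambda),\operatorname{ord}(\mu)$ divides $[K(R):K]$, together with Merel applied to $E$ or to $E/V$). The paper reaches (b) and (c) instead by an explicit determination of $\Delta=G\cap\Cs$ as one of two concrete subgroups $\Delta_{1},\Delta_{2}$, which yields the sharper statement $(\ell-1)/(2\tau)\mid[K(S):K]$.

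There are, however, two genuine gaps. First, your exclusion of Type~1 is not a proof: there is no compatible system $\{\lambda_{\ell}^{12}\}_{\ell}$ here (you have a single $\ell$), and the conclusion of the relevant argument is not that $E$ acquires CM but that $K$ contains the CM field of \emph{some} CM elliptic curve over $K$, i.e.\ that $K$ has RCM. This is exactly the content of Theorem~\ref{LV14} (Larson--Vaintrob), which the paper invokes directly; your sketch neither cites it nor reproves it. Second, and more seriously, your argument for (a) fails. A second Galois-stable line $V'\neq V$ is \emph{not} excluded by any ``Gr\"ossencharacter'' or Momose-type argument: both characters already satisfy $\psi_{i}^{12}=\cyc^{6}$, so Theorem~\ref{LV14} and Theorem~\ref{M95A} give no further obstruction, and $K$-points on the split-Cartan modular curve are not known to be ruled out by the non-RCM hypothesis alone. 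The paper's proof of (a) is entirely different and local: using the image of inertia at a prime above $\ell$ (via \cite[Theorem~2]{G22}), if $G\subseteq\Cs$ then $G$ must contain a conjugate of $\diagexp{0,e}$ or of $\Cns^{e}$ for some $e\in\{1,2,3,4,6\}$; combined with the already-determined shape of $\Delta$ and the congruences in (d), this forces $\ell-1\mid 2\tau e$ or $\ell+1\mid e$, hence $\ell\le 7$, a contradiction. You are missing this inertia ingredient entirely.
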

\begin{proof}
	It follows from Proposition~\ref{Bl} since except for finitely many primes $\ell$ satisfy all three conditions (i)-(iii) in Proposition~\ref{Bl}.
\end{proof}

Then, by applying Proposition~\ref{woRCM_main}, we prove the following theorem whose proof is given in Section~\ref{main_proof}.

\begin{theorem}\label{main}
	Let $K$ be a number field without RCM and \begin{align}	\label{setR}
		\calR\left(K\right):= \left\{ \text{prime divisors of }\ell-1: \begin{aligned}&\ell \in \PDI_{2}\left(K\right)\text{ in \eqref{setP}},\\& \ell \equiv 7,11,23,31,35\pmod{36} \end{aligned}\right\}.
	\end{align}
	If $\calR\left(K\right)$ is finite,  then $K$ satisfies $\mathbf{Property}~\calP\left(K\right)$.
\end{theorem}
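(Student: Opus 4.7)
The plan is to choose $p_K$ to be a prime exceeding both the constant $N_K$ supplied by Proposition~\ref{woRCM_main} and $\max\calR(K)$ (which is finite by hypothesis), and then to argue by contradiction. Suppose $L/K$ has degree $d$ whose minimum prime divisor exceeds $p_K$, and that there is a point $P_0 \in E(L)_{\tors} \setminus E(K)_{\tors}$. Replacing $P_0$ by its prime-power component of appropriate order (Chinese remainder theorem), I may assume $P_0$ has order $\ell^n$ for a prime $\ell$. Let $i$ be the smallest positive integer with $\ell^i P_0 \in E(K)$, and set $P := \ell^{i-1}P_0 \in E(L)\setminus E(K)$ (so $\ell P \in E(K)$) and $R := \ell^{n-1}P_0$, an $\ell$-torsion point of $E(L)$. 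I distinguish Case~A ($i = n$: so $R = P \notin E(K)$) from Case~B ($i < n$: so $R \in E(K)$).

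The first step is to force $\ell > N_K$. Since $P \notin E(K)$ and $K(P) \subseteq L$, the degree $[K(P):K]$ is nontrivial and divides $d$, so every one of its prime divisors exceeds $p_K \ge 7$. But $[K(P):K]$ also divides the order of $\GL{\bbz/\ell^{n-i+1}\bbz}$, so every prime divisor of $[K(P):K]$ is either $\ell$ itself or a prime divisor of $\ell^2 - 1$. For $\ell \le p_K$ all such primes are $\le p_K$, which is incompatible with having a prime divisor larger than $p_K$. Hence $\ell > p_K > N_K$.

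Next I apply Proposition~\ref{woRCM_main} to $E/K$ and $\ell$, using $R$ to verify the hypothesis: in Case~A, $[K(R):K]$ divides $d$ and so has only prime divisors $>p_K \geq 7$; in Case~B, $[K(R):K] = 1$; either way $2, 3 \nmid [K(R):K]$. Conclusion~(b) then produces a prime $r \mid \ell - 1$ such that $r \mid [K(S):K]$ for every nonzero $S \in E[\ell]$. Specializing to $S = R$: in Case~B this reads $r \mid 1$, impossible, so Case~B cannot occur. In Case~A, $r \mid [K(R):K]$ divides $d$, so $r > p_K$; but conclusion~(c) places $\ell$ in $\PDI_2(K)$ with $\ell$ congruent to one of $7, 11, 23, 31, 35$ modulo $36$, so by the definition of $\calR(K)$ we have $r \in \calR(K)$, whence $r \le \max\calR(K) < p_K$. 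This contradicts $r > p_K$ and finishes the proof.

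The main obstacle I anticipate is Case~B, where torsion grows at a higher $\ell$-power while $E$ already has a $K$-rational $\ell$-torsion point, so the natural candidate $R$ has degree $1$ over $K$ and does not by itself yield any arithmetic constraint on $\ell$. The saving observation is that conclusion~(b) of Proposition~\ref{woRCM_main} is incompatible with the existence of such a $K$-rational $\ell$-torsion point on $E$ once $\ell > N_K$; this is precisely what eliminates Case~B automatically and reduces everything to the direct Case~A argument.
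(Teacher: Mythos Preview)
Your proof is correct and takes a genuinely different route from the paper's. The paper includes the prime divisors of the Merel constant $M(K)$ in its definition of $p_K$ and then splits on the size of $\ell$: for $\ell \le p_K$ it invokes Proposition~\ref{small_prime} (which rests on Merel's uniform boundedness theorem) to get $E(L)[\ell^\infty] = E(K)[\ell^\infty]$, while for $\ell > p_K$ it shows $E(L)[\ell] = \{O\}$ directly via Proposition~\ref{woRCM_main}. You bypass Merel and Proposition~\ref{small_prime} entirely: the observation that $[K(P):K]$ divides $\left|\GL{\bbz/\ell^{n-i+1}\bbz}\right|$, all of whose prime factors are at most $p_K$ when $\ell \le p_K$, forces $\ell > p_K$ from the outset. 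Your Case~B elimination---that conclusion~(b) of Proposition~\ref{woRCM_main} is incompatible with a nonzero $K$-rational $\ell$-torsion point---is a clean device; the paper never isolates this case, since it proves $E(L)[\ell]=\{O\}$ outright for large $\ell$ and handles possible $\ell$-power growth for small $\ell$ through Proposition~\ref{small_prime}. Your argument is thus more elementary (no appeal to Merel), while the paper's is more modular and records Proposition~\ref{small_prime} as an auxiliary result of independent interest. One small remark: you assert $p_K \ge 7$ without comment; this is harmless since one may always enlarge $p_K$, but it would be cleaner to build it into your choice of $p_K$ explicitly.
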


Finally, our main result, Theorem~\ref{quad_main} is proved by applying Theorem~\ref{main} and the following result \cite[Theorem~4]{M95} by Momose and we give its proof in Section~\ref{main_proof}.

\begin{theorem*}[{\cite[Theorem~4]{M95}}]\label{M95T4}
	For any quadratic field $\calK$, $\PDI_{2}\left(\calK\right)$ is finite.
\end{theorem*}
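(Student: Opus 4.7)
My plan is to extract from the Type~$2$ relation $\lambda^{12}=\cyc^{6}$ an auxiliary finite-order character and use its rigidity together with Frobenius evaluation to bound $\ell$. Set $\chi:=\lambda^{2}\cyc^{-1}$, so $\chi^{6}=\lambda^{12}\cyc^{-6}=1$, and $\chi\colon\Gal(\overline{\calK}/\calK)\to\Flx$ has image in the $6$-th roots of unity. For $\ell$ larger than a bound depending only on $\calK$, both $\lambda$ and $\cyc$ are unramified at all primes of $\calK$ not above $\ell$ (using N\'eron--Ogg--Shafarevich at primes of good reduction and the Serre bound on inertia images at primes of additive reduction), so $\chi$ is unramified away from $\ell$; since $\chi$ has order coprime to $\ell$, it is also tame at primes above $\ell$. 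By class field theory, $\chi$ then corresponds to a character of the ray class group of $\calK$ with conductor dividing $\ell\cO_{\calK}$ and order dividing $6$, and for each $\calK$ the number of such characters is uniformly bounded in $\ell$.

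Next, for each candidate character $\chi$ I would fix a rational prime $p$ (say $p=5$) and a prime $\fkp\mid p$ of $\calK$ at which $E$ has good reduction. The Frobenius eigenvalues $\alpha_{\fkp},\beta_{\fkp}$ on the $\ell$-adic Tate module satisfy $\alpha_{\fkp}+\beta_{\fkp}=a_{\fkp}\in\bbz$ and $\alpha_{\fkp}\beta_{\fkp}=N\fkp$, while $\lambda(\operatorname{Frob}_{\fkp})\equiv\alpha_{\fkp}\pmod{\fkL}$ for some prime $\fkL\mid\ell$ of $\overline{\bbq}$. Hence
\[
\alpha_{\fkp}^{2}-\chi(\operatorname{Frob}_{\fkp})\cdot N\fkp\equiv 0\pmod{\fkL}.
\]
The left side is an algebraic integer in $\calK(\zeta_{6})$ of absolute value at most $2N\fkp$ by the Hasse bound, so its norm to $\bbq$ is a fixed bounded integer divisible by $\ell$. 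Either $\ell$ divides a nonzero integer (bounding $\ell$), or the congruence is an algebraic equality $\alpha_{\fkp}^{2}=\chi(\operatorname{Frob}_{\fkp})\cdot N\fkp$.

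In the vanishing case, the ratio $\alpha_{\fkp}/\sqrt{N\fkp}$ is a $12$-th root of unity, forcing either $a_{\fkp}=0$ (so $E$ has supersingular reduction at $\fkp$) or $E$ to have CM by an order in $\bbq(\zeta_{6})$ or $\bbq(i)$. In the first subcase one can replace $\fkp$ by another auxiliary prime $\fkp'$ where $E$ has ordinary reduction, which exists apart from a density-zero set by Serre's result for non-CM curves and is explicit for the finitely many CM curves involved. In the second subcase only finitely many CM-isomorphism classes of $E/\calK$ arise, and for each one the attached Hecke character $\psi_{E}$ of $\calK$ satisfies $\lambda\equiv\psi_{E}\pmod{\fkL}$, which constrains $\fkL$, and hence $\ell$, to a finite list.

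The main obstacle will be twofold. First, one must achieve uniformity in $E$: since the elliptic curve may vary with $\ell$, the bound has to be derived without fixing $E$ in advance, which likely requires evaluating the congruence at a prudent sequence of auxiliary primes or a quantitative Chebotarev input. Second, handling CM curves over imaginary quadratic $\calK$ of class number one (for which $\PDI(\calK)$ is itself infinite) is delicate: one must extract from the precise Type~$2$ equality $\lambda^{12}=\cyc^{6}$, as opposed to a weaker Type~$1$ unramified-twist condition, a strong enough restriction to show that only finitely many $\ell\equiv 3\pmod{4}$ can arise even among the infinitely many isogeny primes of such a $\calK$.
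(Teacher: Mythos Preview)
The paper does not prove this statement at all: it is quoted verbatim as \cite[Theorem~4]{M95} and used as a black box in the proof of Theorem~\ref{quad_main}. There is therefore nothing in the paper to compare your plan against; the ``proof'' in the paper consists of the single sentence that Momose's theorem implies finiteness of $\calR(\calK)$, after which Theorem~\ref{main} applies.

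As for your plan itself, the broad shape --- extracting a finite-order character from $\lambda^{12}=\cyc^{6}$, evaluating at Frobenius, and bounding $\ell$ via a nonzero algebraic integer of controlled norm --- is indeed the flavor of Momose's arguments in \cite{M95}. But you have correctly identified the real difficulty, and it is not a minor one: the elliptic curve $E$ varies with $\ell$, so you cannot simply ``fix a prime $\fkp$ of good reduction'' or ``replace $\fkp$ by an ordinary prime,'' since there is no single $E$ to which Serre's density result or any specific reduction type applies. Momose's actual argument for quadratic fields works instead with the isogeny character abstractly (without reference to a particular $E$), exploiting the constraint that $\calK$ has degree $2$ to control the possible values of $\lambda$ on Frobenius elements directly, and then playing this off against the congruence $\ell\equiv 3\pmod 4$. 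Your sketch does not yet contain the mechanism that makes the bound uniform in $E$, and without it the argument does not close; this is a genuine gap rather than a detail to be filled in.
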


\begin{remark}\label{PDI-PDI2}
	The proofs of 
 Theorem~\ref{quad_main} and Theorem~\ref{main} rely on the finiteness of $\PDI_{2}\left(K\right)$, while  \cite[Theorem~1]{G22}  is proved under the assumption on finiteness of   $\PDI\left(K\right)$.
\end{remark}

\section{The $\ell^{\infty}$-torsion subgroups for sufficiently large primes $\ell$ : the proofs of Proposition~\ref{pre_main} and Proposition~\ref{woRCM_main}}\label{large}

In this section, we investigate the possible structures of subgroups of $\Gal\left(K\left(E\left[\ell\right]\right)/K\right)$ and we prove Proposition~\ref{pre_main} and Proposition~\ref{woRCM_main} by applying them.

\subsection{Subgroups of $\Gal\left(K\left(E\left[\ell\right]\right)/K\right)$ and their corresponding subgroups of $\GLl$}
Let $K$ be a number field and $E/K$ be an elliptic curve over $K$. For a positive integer $N$, recall that $E\left[N\right]\cong \bbz/N\bbz\times \bbz/N\bbz$. So  we consider the Galois group $\Gal\left(K\left(E\left[N\right]\right)/K\right)$ as a subgroup of GL$_2(\bbz/N\bbz)$ depending on a basis $\calB=\left\{P,Q\right\}$ for $E\left[N\right]$ via the following map.

\begin{defn}
	Let $K$ be a number field, $E/K$ an elliptic curve over $K$, and $N$ a positive integer. For  a given (ordered) basis $\calB=\left\{P,Q\right\}$ for $E\left[N\right]$, we have the injective group homomorphism,
\begin{align*}
		\rho_{\calB}: \Gal\left(K\left(E\left[N\right]\right)/K\right) &\to \operatorname{GL}_{2}\left(\bbz/N\bbz\right) \text{ defined by }\\
\sigma& \mapsto  \rho_{\calB}\left(\sigma\right): \left(\begin{matrix}P\\Q\end{matrix}\right) \mapsto \left(\begin{matrix}P^{\sigma}\\Q^{\sigma}\end{matrix}\right).
\end{align*}

Let  $G\left(\calB\right):=\rho_{\calB}\left(\Gal\left(K\left(E\left[N\right]\right)\right)/K\right)$, the image of $\rho_\calB$.
\end{defn}


For an odd prime $\ell$ and a given basis $\calB=\left\{P,Q\right\}$ for $E\left[\ell\right]$, we consider the right action of the image $G\left(\calB\right)$ on the row vectors in  $ \Fl^{2}$.

\begin{defn}
	Let  $G $ be a subgroup of $\GLl$.  For each non-zero row vector $\left(c\,d\right) \in \Fl^{2}$, we define the following subgroups of $G$:
	\begin{itemize}
		\item $G_{c\,d} := \left\{ A\in G: \left(c\,d\right)A = \left(c\,d\right)\right\}$.
		\item $G^{\circ} := G \cap \SLl$.
	\end{itemize}

We note that $G_{c\,d}^{\circ}  = G_{c\,d} \cap \SLl = G_{c\,d} \cap G^{\circ}$.
\end{defn}


\begin{lemma}\label{easy}
	Let $K$ be a number field and  $E/K$ be an elliptic curve over $K$. For an odd prime $\ell$, let  $\zeta_{\ell}$ be a primitive $\ell$th root of unity, and for a given basis $\calB=\left\{P,Q\right\}$ for $E\left[\ell\right]$, we let $G = G\left(\calB\right)$. Then, for any non-zero row vector $\left(c\,d\right) \in \Fl^{2}$, we have the following: \begin{enumerate}[\normalfont (a)]
		\item $G_{c\,d} = \rho_{\calB}\left(\Gal\left(K\left(E\left[\ell\right]\right)/K\left(\left[c\right]P+\left[d\right]Q\right)\right)\right)$, and $\left[ G: G_{c\,d}\right] = \left[ K\left(\left[c\right]P+\left[d\right]Q\right): K\right]$. 
		\item $G^{\circ} = \rho_{\calB}\left(\Gal\left(K\left(E\left[\ell\right]\right)/K\left(\zeta_{\ell}\right)\right)\right)$, $G / G^{\circ} \cong \Gal\left(K\left(\zeta_{\ell}\right)/K\right)$, and $\left[ G: G^{\circ}\right] = \left[ K\left(\zeta_{\ell}\right): K\right]$.
		\item $G_{c\,d}^{\circ} = \rho_{\calB}\left(\Gal\left(K\left(E\left[\ell\right]\right)/K\left(\zeta_{\ell},\left[c\right]P+\left[d\right]Q\right)\right)\right)$, and $\left[ G: G_{c\,d}^{\circ}\right] = \left[ K\left(\zeta_{\ell}, \left[c\right]P+\left[d\right]Q\right): K\right]$.
		\item $G_{c\,d}^{\circ}$ is trivial, or it  is conjugate to $\left\langle U \right\rangle$, where $U=\begin{psmallmatrix}1&1\\0&1\end{psmallmatrix}$. In particular,  $\left| G_{c\,d}^{\circ} \right| \bigm| \ell$.
		\item $G_{c\,d}=\begin{cases} G_{1\,d/c}, &\text{ if } c\ne 0,\\ G_{01}, &\text{ if } c=0.\end{cases}$
	\end{enumerate}


 \
 
	\begin{figure}[h!]\begin{tikzcd}[cramped, sep=small]
                                               & {K\left(E\left[\ell\right]\right)}                                              &                                                                     &  &                               & \left\{ I_{2} \right\}                                         &                            \\
                                               & {K\left(\zeta_{\ell},\left[c\right]P+\left[d\right]Q\right)} \arrow[u, no head] &                                                                     &  &                               & G^{\circ}_{cd} \arrow[u, no head]         &                            \\
K\left(\zeta_{\ell}\right) \arrow[ru, no head] &                                                                                 & {K\left(\left[c\right]P+\left[d\right]Q\right)} \arrow[lu, no head] &  & G^{\circ} \arrow[ru, no head] &                                           & G_{cd} \arrow[lu, no head] \\
                                               & K \arrow[ru, no head] \arrow[lu, no head]                                       &                                                                     &  &                               & G \arrow[ru, no head] \arrow[lu, no head] &                         
	\end{tikzcd}
 \caption{The diagrams of subfields of $K\left(E\left[\ell\right]\right)$ containing $K$ and their corresponding Galois groups in $\Gal\left( K\left(E\left[\ell\right]\right)/ K\right)$}
 \label{fig-1}
 \end{figure}
\end{lemma}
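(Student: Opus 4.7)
The plan is to establish all five items by first isolating a purely formal coordinate computation for the action of the matrix $\rho_\calB(\sigma)$ on $E[\ell]$ in the basis $\calB$, and then translating the standard Galois correspondence and the Weil pairing into that coordinate language. Concretely, for $\sigma \in \Gal(K(E[\ell])/K)$ with $\rho_\calB(\sigma) = A$, expanding $([c]P + [d]Q)^\sigma = [c]P^\sigma + [d]Q^\sigma$ via the rows of $A$ shows that the new coordinate row vector of $([c]P+[d]Q)^\sigma$ in the basis $\calB$ equals $(c\,d)A$. In particular, $[c]P+[d]Q$ is fixed by $\sigma$ exactly when $(c\,d)A = (c\,d)$, which is precisely the defining condition for $G_{cd}$.

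With this identity in hand, parts (a), (c), and (e) are bookkeeping. For (a), the fundamental theorem of Galois theory identifies $\Gal(K(E[\ell])/K([c]P+[d]Q))$ with the stabilizer of $[c]P+[d]Q$ in $\Gal(K(E[\ell])/K)$, which $\rho_\calB$ sends bijectively onto $G_{cd}$; the index statement is the orbit--stabilizer relation. For (e), scaling a nonzero row vector by the unit $c^{-1} \in \Flx$ does not change its stabilizer, yielding $G_{cd} = G_{1, d/c}$ when $c \ne 0$. Part (c) is obtained by intersecting (a) and (b).

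For (b), I would invoke the Weil pairing $e_\ell: E[\ell] \times E[\ell] \to \mu_\ell$. Setting $\zeta_\ell := e_\ell(P, Q)$ (a primitive $\ell$-th root of unity because $\calB$ is a basis and $e_\ell$ is non-degenerate), Galois-equivariance together with the bilinear and alternating properties of $e_\ell$ give $\zeta_\ell^\sigma = e_\ell(P^\sigma, Q^\sigma) = e_\ell(P, Q)^{\det \rho_\calB(\sigma)} = \zeta_\ell^{\det \rho_\calB(\sigma)}$. Hence $\det \circ \rho_\calB$ coincides with the $\ell$-th cyclotomic character of $\Gal(K(E[\ell])/K)$, so its kernel is $\Gal(K(E[\ell])/K(\zeta_\ell))$ and its image is $\Gal(K(\zeta_\ell)/K)$; the three claims in (b) are immediate consequences.

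The most interesting step is (d). Any $A \in G_{cd}^\circ$ has the nonzero vector $(c\,d)$ as a left eigenvector with eigenvalue $1$, and $\det A = 1$ forces the remaining eigenvalue to be $1$ as well; hence $A$ is either $I_2$ or a unipotent matrix whose fixed line is the span of $(c\,d)$. After conjugating by a change-of-basis matrix moving $(c\,d)$ to $(1\,0)$, the set of all such matrices becomes the upper unitriangular subgroup $\langle U\rangle$ of order $\ell$. Since $\ell$ is prime, $G_{cd}^\circ$ can only be trivial or equal (up to this conjugation) to the whole $\langle U\rangle$, so in particular $|G_{cd}^\circ|\mid \ell$. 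I do not anticipate a serious obstacle beyond pinning down the row-versus-column convention for $\rho_\calB$ once and applying the Weil pairing at the right place; the rest is a direct dictionary between subgroups of $G$ and intermediate fields in Figure~\ref{fig-1}.
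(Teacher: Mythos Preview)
Your proposal is correct and follows essentially the same approach as the paper's proof: the paper simply asserts that (a)--(c) follow from the definition of $\rho_\calB$ and the Weil pairing, handles (d) by the same eigenvalue argument (both eigenvalues equal $1$, so the group is a subgroup of a conjugate of $\langle U\rangle$), and calls (e) obvious. Your write-up supplies the details the paper omits---the row-vector computation $(c\,d)A$, the explicit identification $\det\circ\rho_\calB=\cyc$, and the conjugation in (d)---but the underlying argument is identical.
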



\begin{proof}
	(a), (b), and (c) follow by the definitions of $\rho_{\calB}$ and the Weil pairing directly. In fact, the Weil pairing $e_{\ell}: E\left[ \ell\right] \times E\left[ \ell\right] \to  \{z \in \mathbb{A}^{1} : z^{\ell}-1 = 0 \}$ is a non-degenerate alternative multi-linear form as a $K$-morphism of varieties, so $e_{\ell}\left(P,Q\right)$ is a primitive $\ell$th root of unity~$\zeta_{\ell}$, and $\zeta_{\ell} \in K\left(E\left[\ell\right]\right)$.
	
	For (d), $G_{c\,d}^{\circ}$ consists of matrices with only one eigenvalue $1$ and such non-trivial subgroups of $\GLl$ are conjugates of $\left\langle U \right\rangle$. Hence, $G_{c\,d}^{\circ}$ is trivial, or of order $\ell$.

	(e) is obvious.
\end{proof}


Now, we study the structures of certain subgroups of $\GLl$ which are corresponding to the subgroups of $ G\left(\calB\right)$ of our interest.


We fix a generator $\alpha$ of $\Flx$. Then, $\Fll =\Fl\left(\sqrt{\alpha}\right)$, so we denote the non-trivial Galois action of $\Gal\left(\Fll/\Fl\right)$ by $\overline{a+b\sqrt{\alpha}}:=a-b\sqrt{\alpha}$ for $a,b\in \Fl$.  We call $\beta\in \Fll$  {\it rational (resp. irrational)} if  $\beta\in \Fl$ (resp. $\beta\notin \Fl$). We let $$U:=\begin{psmallmatrix}1&1\\0&1\end{psmallmatrix}, \text{ and } \diag{a,d}:=\begin{psmallmatrix}a&0\\0&d\end{psmallmatrix} \text{ for } a,d\in\Fl^\times.$$
We recall the Borel subgroup $\Bl(\ell)$, the split Cartan subgroup $\Cs(\ell)$, and the non-split Cartan subgroup $\Cns(\ell)$ of $\GLl$. If there is no confusion, we denote them by $\Bl, \Cs, \Cns$ respectively omitting  $\ell$.

The Borel subgroup is defined by $\Bl := \left\{\begin{psmallmatrix}a&b\\0&d\end{psmallmatrix} \in \GLl\right\}$ which is the normalizer of each subgroup of $\Bl$ containing $U$.

The split Cartan subgroup $\Cs \subseteq \GLl$ is defined by the group of all invertible diagonal matrices. We denote an invertible diagonal matrix $\diag{\alpha^{a},\alpha^{b}}$ by $\diagexp{a,b}$. Then, we see that $\Cs\cong  \left(\bbz / \left(\ell-1\right)\bbz\right)^{2}$ via the group isomorphism, $\operatorname{diagexp} : \left(\bbz / \left(\ell-1\right)\bbz\right)^{2} \to \Cs$ defined by $(a,b)\mapsto \diagexp{a,b}$ in the above. 


The non-split Cartan subgroup $\Cns \subseteq \GLl$ is defined by $$
	\left\{\left(\begin{matrix}a&b\alpha\\b&a\end{matrix}\right):\left(0,0\right) \ne \left(a,b\right) \in \Fl^{2}\right\}.
$$
Then, we see that $\Cns\cong  \left(\Fl\left(\sqrt{\alpha}\right)\right)^{\times}$ via a group isomorphism $\left(\Fl\left(\sqrt{\alpha}\right)\right)^{\times}\rightarrow \Cns$ defined by $a+b \sqrt{\alpha} \mapsto \begin{psmallmatrix}a&b\alpha\\b&a\end{psmallmatrix} $.

We note that for each $\left(a,b\right) \in \Fl^{2}-\{(0,0)\}$, the matrix $\begin{psmallmatrix}a&b\alpha\\b&a\end{psmallmatrix} \in \Cns$ has two conjugate eigenvalues $\delta$ and  $\bar{\delta}$ in $\Fll$.

It is clear that
$$\left|\Bl\right|= \ell(\ell-1)^2,\quad  \left|\Cs\right|=(\ell-1)^2, \quad\text{ and }\quad \left|\Cns\right|=\ell^2-1.$$

\begin{lemma}\label{ab_subgp}
	Let $H$ be  an abelian subgroup of $ \GLl$ such that $\ell \nmid \left| H \right|$. Then, there exists $T\in \GLl$ such that $T^{-1}HT\subseteq \Cs$  or $T^{-1}HT\subseteq \Cns$.
\end{lemma}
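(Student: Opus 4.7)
The plan is to first show that the hypothesis $\ell \nmid |H|$ forces every element of $H$ to be semisimple (i.e.\ diagonalizable over $\overline{\mathbb{F}_\ell}$), and then to split into two cases depending on whether some element of $H$ has an eigenvalue outside $\mathbb{F}_\ell$. In the first case we simultaneously diagonalize to land in $\Cs$; in the second, we use the centralizer of a regular semisimple element with irrational eigenvalues to land in $\Cns$.

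First I would observe that any $A \in \GLl$ whose minimal polynomial has a repeated root in $\overline{\mathbb{F}_\ell}$ is $\GLl$-conjugate to $\lambda\begin{psmallmatrix}1&1\\0&1\end{psmallmatrix} \cdot \begin{psmallmatrix}\mu&0\\0&\mu\end{psmallmatrix}$-type Jordan block, and a direct computation shows that such $A$ has order divisible by $\ell$ (the unipotent part contributes a factor of $\ell$). Since $\ell \nmid |H|$, no element of $H$ can have a non-trivial Jordan block, so every $A \in H$ is diagonalizable over $\Fll$.

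Next I would argue the diagonal case. If every $A \in H$ has both eigenvalues in $\Fl$, then each $A$ is diagonalizable over $\Fl$. Because $H$ is abelian, the eigenspace decomposition of any one element is preserved by every other element, and a standard simultaneous diagonalization argument (e.g.\ induction on $|H|$, or directly via common eigenspaces) produces a basis of $\Fl^{2}$ consisting of common eigenvectors for all of $H$. Taking $T \in \GLl$ to be the change-of-basis matrix to this eigenbasis gives $T^{-1}HT \subseteq \Cs$.

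Now suppose some $A \in H$ has an eigenvalue $\delta \in \Fll \setminus \Fl$; then the other eigenvalue is its Galois conjugate $\bar\delta$, and $\delta \ne \bar\delta$, so $A$ is regular semisimple. The $\Fl$-subalgebra $\Fl[A] \subseteq M_{2}(\Fl)$ is $2$-dimensional and contains the non-scalar element $A$ with irreducible characteristic polynomial, so $\Fl[A] \cong \Fll$ as $\Fl$-algebras; in particular, $\Fl[A]$ is a field. A dimension count (or the fact that $A$ is regular semisimple) shows that the centralizer of $A$ in $M_{2}(\Fl)$ is exactly $\Fl[A]$, hence its centralizer in $\GLl$ is $\Fl[A]^{\times}$, cyclic of order $\ell^{2}-1$. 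To see this centralizer is conjugate to $\Cns$, I would choose a generator $\beta \in \Fl[A]^\times$ corresponding to $\sqrt{\alpha} \in \Fll^\times$ under the isomorphism $\Fl[A] \cong \Fll$ (i.e.\ $\beta^{2} = \alpha \cdot I$), pick any $v \in \Fl^{2} \setminus \{0\}$, and take the basis $\{v, \beta v\}$: in this basis $\beta$ acts by $\begin{psmallmatrix}0&\alpha\\1&0\end{psmallmatrix}$ and every element $aI + b\beta$ of $\Fl[A]^{\times}$ acts by $\begin{psmallmatrix}a&b\alpha\\b&a\end{psmallmatrix}$, which is the standard form of $\Cns$. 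Since $H$ is abelian and $A \in H$, we have $H \subseteq \Fl[A]^{\times}$, and conjugating by this $T$ yields $T^{-1}HT \subseteq \Cns$.

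The only delicate step is verifying that the centralizer $\Fl[A]^{\times}$ is conjugate to the specific $\Cns$ fixed at the start of the section; this is really a normal-form statement for regular semisimple elements with irrational eigenvalues, and the explicit basis $\{v, \beta v\}$ above turns it into a short computation, so I do not expect any serious obstruction.
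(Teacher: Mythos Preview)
Your proof is correct, but the irrational-eigenvalue case takes a genuinely different route from the paper's. The paper first simultaneously diagonalizes all of $H$ over $\Fll$ via some $T_{1} \in \GL{\Fll}$, argues directly that every element of $T_{1}^{-1}HT_{1}$ has the form $\diag{\delta,\bar\delta}$, and then produces the conjugator $T \in \GLl$ by an explicit four-matrix computation, checking that $T = PQ^{-1}$ is $\Gal(\Fll/\Fl)$-invariant. You instead stay entirely over $\Fl$: you identify the centralizer of a single regular semisimple $A$ with the unit group of the field $\Fl[A] \cong \Fll$, observe that $H$ (being abelian) lies inside this centralizer, and then conjugate the whole of $\Fl[A]^{\times}$ to $\Cns$ in one stroke via the cyclic basis $\{v,\beta v\}$. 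Your approach is cleaner and avoids any passage to $\Fll$ or Galois-descent verification; the paper's approach is more computational but makes the internal structure of $H$ (as a cyclic group generated by some $\diag{\mu,\bar\mu}$) explicit along the way, which is not needed here but matches the paper's hands-on style elsewhere.
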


\begin{proof}
	Since  $H$ is abelian and $\ell \nmid \left| H \right|$, all matrices in $H$ are simultaneously diagonalizable over $\Fll$.
	
	If for each $A\in H$, all eigenvalues of $A$ are rational, then $T^{-1}HT \subseteq \Cs$ for some $T\in \GLl$.
	
	Suppose that there exists $A \in H$ with  two irrational eigenvalues. There exists  $T_{1} \in \operatorname{GL}_{2}\left(\Fll\right)$ such that $T_{1}^{-1}HT_{1}$ consists of diagonal matrices over $\Fll$. So $D := T_{1}^{-1} A T_{1} = \diag{\lambda,\overline{\lambda}}$, for some irrational element $\lambda \in \Fll$, and  for any $A' \in H$, $D':= T_{1}^{-1}A'T_{1}$ is diagonal. Then, the diagonal entries of either $D'$ or $DD'$ are irrational. In other words, the diagonal entries of one of $D'$ or $DD'$ are conjugate to each other. Thus, $D' = \diag{\lambda',\overline{\lambda'}}$ for an eigenvalue $\lambda'$ of $A'$. Hence, $T_{1}^{-1}HT_{1}$ is contained in $\left\{\diag{\delta,\overline{\delta}}:\delta \in \Fllx\right\}$. Since $\Fllx$ is cyclic, there is an irrational element $\mu \in \Fllx$ such that $T_{1}^{-1}HT_{1} = \left\langle \diag{\mu,\overline{\mu}} \right\rangle$. We let $\mu = a+b\sqrt{\alpha}$ for some $a,b\in \Fl$. There are $t,u,v \in \Fl$ such that $\begin{psmallmatrix}t&u\\v&2a-t\end{psmallmatrix} = T_{1}\diag{\mu,\overline{\mu}} T_{1}^{-1} \in H$. We let $P := \begin{psmallmatrix}u&u\\a+b\sqrt{\alpha}-t&a-b\sqrt{\alpha}-t\end{psmallmatrix}$, $Q := \begin{psmallmatrix}b\alpha&b\alpha\\b\sqrt{\alpha}&-b\sqrt{\alpha}\end{psmallmatrix}$, $R := \begin{psmallmatrix}t&u\\v&2a-t\end{psmallmatrix}$, and $S := \begin{psmallmatrix}a&b\alpha\\b&a\end{psmallmatrix}$. Then, we can see that $P^{-1}RP = \diag{\mu,\overline{\mu}} = Q^{-1}SQ$ and that $T := PQ^{-1}$ is Gal$(\Fll/\Fl)$-invariant. Hence, $T\in \GLl$ and $T^{-1}HT = \left\langle S \right\rangle \subseteq \Cns$.
\end{proof}

We denote the normalizers of $\Cs$ and  of $\Cns$  in $\GLl$ by $\Ns$ and $\Nns$, respectively. Then,
$$\Ns= \left\langle \Cs, \begin{psmallmatrix}0&1\\1&0\end{psmallmatrix}\right\rangle \text{ and } \Nns=\left\langle \Cns, \begin{psmallmatrix}1&0\\0&-1\end{psmallmatrix}\right\rangle.$$

\begin{lemma}\label{normalizers}
	Let $H$ be a subgroup of $\Cs$ or $\Cns$ such that $H \not\subseteq \left\{ kI_{2}: k \in \Flx \right\}$ and $\mathscr{N}$ be the normalizer of $H$ in $\GLl$. Then, $$\mathscr{N} \subseteq \begin{cases} \Ns,  &\text{ if }H \subseteq \Cs, \\
 \Nns,  &\text{ if } H \subseteq \Cns.\end{cases}$$
\end{lemma}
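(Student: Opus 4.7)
The plan is a two-case argument with a common structure: exhibit a non-scalar witness $A \in H$, compute its centralizer in $\GLl$, and check that normalizing $H$ allows only two possibilities for how $T$ can conjugate $A$, namely $A$ itself or the Galois/coordinate-swap companion of $A$ inside the same Cartan. First I would use the hypothesis $H \not\subseteq \{kI_2 : k \in \Flx\}$ to pick a non-scalar $A \in H$. In the split case, $A = \diag{a,d}$ with $a \ne d$; in the non-split case, $A = \begin{psmallmatrix}a & b\alpha \\ b & a\end{psmallmatrix}$ with $b \ne 0$, so $A$ has distinct Galois-conjugate eigenvalues $a \pm b\sqrt{\alpha} \in \Fll$. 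In both cases $A$ is regular semisimple, hence the $\Fl$-algebra $\Fl[A] \subseteq M_2(\Fl)$ is $2$-dimensional; a direct comparison gives $\Fl I + \Fl A = \Cs \cup \{0\}$ in the split case and $\Fl I + \Fl A = \Cns \cup \{0\}$ in the non-split case. Combined with the standard fact that the centralizer of a matrix with distinct eigenvalues is its polynomial algebra, this shows that the centralizer of $A$ in $\GLl$ is exactly $\Cs$ (respectively $\Cns$), so any Cartan subgroup becomes self-centralizing the moment it contains a non-scalar element.

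For the main step, fix $T \in \mathscr{N}$. Then $T^{-1}AT \in H$ lies in the ambient Cartan and has the same eigenvalues as $A$. An element of $\Cs$ is determined by the multiset of its diagonal entries, so in the split case $T^{-1}AT \in \{A, A'\}$ with $A' := \diag{d,a}$. In the non-split case, the only two Cartan elements with the correct eigenvalue multiset are $A$ and $A' := \begin{psmallmatrix}a & -b\alpha \\ -b & a\end{psmallmatrix}$. In both cases the flip $A \mapsto A'$ is implemented by conjugation by an explicit representative of the non-trivial coset of the normalizer modulo the Cartan: $W := \begin{psmallmatrix}0 & 1 \\ 1 & 0\end{psmallmatrix}$ in the split case and $W := \begin{psmallmatrix}1 & 0 \\ 0 & -1\end{psmallmatrix}$ in the non-split case, which a short $2 \times 2$ multiplication confirms.

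The argument concludes by a dichotomy on $T^{-1}AT$: if $T^{-1}AT = A$ then $T$ centralizes $A$, so $T \in \Cs$ or $T \in \Cns$ by the centralizer computation; if instead $T^{-1}AT = A'$, then $TW$ centralizes $A$, whence $TW$ lies in the Cartan and $T$ lies in the non-trivial coset $\Cs \cdot W \subseteq \Ns$ or $\Cns \cdot W \subseteq \Nns$. The step I expect to take the most care is the centralizer computation in the non-split case, where one must verify that the $\Fl$-subalgebra $\Fl I + \Fl A$ exhausts all matrices in $M_2(\Fl)$ that commute with $A$; this follows from the dimension count together with Galois descent from $\operatorname{GL}_2(\Fll)$-centralizers to $\GLl$-centralizers, and once it is in place everything else reduces to routine $2 \times 2$ manipulation.
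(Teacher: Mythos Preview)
Your approach is essentially the same as the paper's: pick a non-scalar witness $A\in H$, observe that any $T\in\mathscr{N}$ must send $A$ to one of the two Cartan elements with the same eigenvalue multiset, and deduce $T$ lies in the Cartan or its nontrivial normalizer coset. The paper simply writes ``by direct calculation'' where you explicitly compute the centralizer and invoke the coset representative $W$; one minor slip is that in the split case $\Fl I+\Fl A$ equals the full diagonal subalgebra, not $\Cs\cup\{0\}$ (e.g.\ $\diag{1,0}$ is in the former but not the latter), though this does not affect your conclusion that the centralizer in $\GLl$ is $\Cs$.
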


\begin{proof}
	Suppose  $H \subseteq \Cs$. Then since $H \not\subseteq \left\{ kI_{2}: k \in \Flx \right\}$,  there exists $B = \diag{a,d} \in H$ where $a,d\in \Fl^\times$ such that  $a\ne d$. For any $X \in \mathscr{N}$, since $XBX^{-1} \in H \subseteq \Cs$,   we conclude that $XBX^{-1}$ is $B$ or $\diag{d,a}$ considering the eigenvalues of $B$. Then, in either case, we can show that $X \in \Ns$ by direct calculation, which implies that $\mathscr{N} \subseteq \Ns$.
	
	Suppose $H \subseteq \Cns$. Then, there exists  $B' = \begin{psmallmatrix}a&b\alpha\\b&a\end{psmallmatrix} \in H$ where $a,b\in \Fl$ such that $b\ne 0$. For any $X \in \mathscr{N}$, since $XB'X^{-1} \in H \subseteq \Cns$, we conclude that $XB'X^{-1}$ is $B'$ or $\begin{psmallmatrix}a&-b\alpha\\-b&a\end{psmallmatrix}$ considering the eigenvalues of $B'$. Then, in either case, we can show  that $X \in \Nns$ by direct calculation, which implies that $\mathscr{N} \subseteq \Nns$.
\end{proof}


Next, we investigate the structures of subgroups of $\GLl$ that  satisfy certain conditions when its intersection with $\SLl$ is considered in the following lemmas.

\begin{lemma}\label{cyclic}
	Let $H$ be a subgroup of $\SLl$ such that $2,\ell \nmid \left| H \right|$.  Then, $H$ is cyclic.
\end{lemma}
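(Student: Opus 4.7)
The plan is to show that $H$ is contained in $T\cap\SLl$ for some single Cartan subgroup $T$ of $\GLl$; since both $\Cs\cap\SLl$ and $\Cns\cap\SLl$ are cyclic (of orders $\ell-1$ and $\ell+1$ respectively), this will give cyclicity of $H$. The starting point is the observation that $2\nmid|H|$ forces $-I_{2}\notin H$, since $-I_{2}$ is the only element of order $2$ in $\SLl$; in particular $H$ meets the scalars of $\GLl$ trivially. For any non-identity $A\in H$, the order of $A$ divides $|H|$ and is coprime to $\ell$, so $A$ is semisimple; combined with $A\neq\pm I_{2}$, the two eigenvalues of $A$ in $\Fll$ are distinct inverses of one another. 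Hence $A$ is regular semisimple, $T_{A}:=C_{\GLl}(A)$ is a Cartan subgroup, and it is the \emph{unique} Cartan of $\GLl$ containing $A$. Setting $M(A):=T_{A}\cap H=C_{H}(A)$, this uniqueness gives $M(A')=M(A)$ for every non-identity $A'\in M(A)$, so as $A$ varies the distinct subgroups $M_{1},\ldots,M_{k}$ of this form partition $H\setminus\{I_{2}\}$ into the blocks $M_{i}\setminus\{I_{2}\}$. Each $M_{i}$ lies in a Cartan $T_{i}$ and satisfies $M_{i}\not\subseteq\{kI_{2}:k\in\Flx\}$, since it contains the non-scalar element $A$.

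The heart of the argument is to prove $N_{H}(M_{i})=M_{i}$. By Lemma~\ref{normalizers} (applied after conjugating $M_{i}$ into $\Cs$ or $\Cns$), one has $N_{\GLl}(M_{i})\subseteq N_{\GLl}(T_{i})$, and the latter has index $2$ over $T_{i}$. A short direct computation shows that every element $B\in(N_{\GLl}(T_{i})\setminus T_{i})\cap\SLl$ satisfies $B^{2}=-I_{2}$, hence has order $4$; since $|H|$ is odd, no such $B$ lies in $H$, so $N_{H}(M_{i})\subseteq H\cap T_{i}=M_{i}$. Therefore the $H$-conjugation orbit of $M_{i}$ has size $|H|/|M_{i}|$, and double-counting $|H\setminus\{I_{2}\}|$ through the orbits, with representative sizes $d_{1},\ldots,d_{r}$, yields
\[
|H|-1=\sum_{j=1}^{r}\frac{|H|}{d_{j}}(d_{j}-1)=|H|\sum_{j=1}^{r}\left(1-\tfrac{1}{d_{j}}\right),
\]
so $1-\tfrac{1}{|H|}=\sum_{j=1}^{r}(1-\tfrac{1}{d_{j}})$. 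Each summand is at least $\tfrac{1}{2}$ while the total is strictly less than $1$, forcing $r\leq 1$. Hence $r=1$, $d_{1}=|H|$, and $H=M_{1}\subseteq T_{1}\cap\SLl$ is cyclic.

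The main obstacle is the identity $N_{H}(M_{i})=M_{i}$; this is precisely where the hypothesis $2\nmid|H|$ intervenes, via the fact that the non-trivial Weyl-coset representatives in $(N_{\GLl}(T_{i})\setminus T_{i})\cap\SLl$ all square to $-I_{2}$ and therefore cannot live in an odd-order subgroup. The remaining ingredients—the partition of $H\setminus\{I_{2}\}$ into Cartan intersections and the final class-equation count—are formal once this step is in hand.
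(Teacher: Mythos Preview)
Your proof is correct and takes a genuinely different route from the paper's. The paper's argument invokes the Feit--Thompson theorem to conclude that $H$ is solvable, then picks the last non-trivial term $N$ of the derived series (which is abelian and normal), diagonalizes $N$ over $\Fll$, and uses the odd order of $H$ to force the conjugation map $H\to\{\pm1\}$ on a generator of $N$ to be trivial; this makes $H$ abelian, whence contained in a Cartan. Your approach instead exploits that every non-identity element of $H$ is regular semisimple (no unipotents since $\ell\nmid|H|$, no scalars since $-I_{2}\notin H$), partitions $H\setminus\{I_{2}\}$ by Cartan intersections, and runs a Frobenius-style class-count after establishing self-normalization $N_{H}(M_{i})=M_{i}$ via the observation that the non-trivial Weyl coset in $\SLl$ consists entirely of order-$4$ elements. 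The payoff of your argument is substantial: it is entirely elementary and avoids the enormous machinery of Feit--Thompson, relying only on the paper's own Lemma~\ref{normalizers} and direct matrix computations. The paper's version is shorter to state once Feit--Thompson is granted, but yours is self-contained in a way the paper's is not. One minor remark: since $|H|$ is odd you actually have $d_{j}\geq 3$, so each summand in the class equation is at least $\tfrac{2}{3}$; your weaker bound $\tfrac{1}{2}$ already suffices, but the sharper one is free.
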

\begin{proof}
	If $H = \left\{I_{2}\right\}$, it is trivially true. We assume that $H\ne \left\{I_{2}\right\}$. Then, $H$ is solvable by the Feit-Thompson theorem(\cite{TJ62}). So we let  $n_{0}$ be the smallest positive integer such that $H^{(n_0)} = \left\{I_{2}\right\}$ where $H^{(n)}$ denotes the $n$th derived subgroup of $H$. Since $H$ is not trivial, $n_{0}>0$ and the non-trivial normal subgroup $N := H^{(n_0-1)}\subseteq H$ is abelian. Since $\ell \nmid \left| N \right|$, all matrices in $N$ are simultaneously diagonalizable over $\Fll$. Thus, there exists $P \in \GL{\Fll}$ such that $P^{-1}NP$ consists of diagonal matrices  over $\Fll$. Since any subgroup of $\Fllx$ is cyclic and $\det\left(P^{-1}NP\right) = \left\{ 1 \right\}$, we have that $P^{-1}NP$ is cyclic. Let $D$ be a generator of $P^{-1}NP$. Considering the eigenvalues of $D$ for each $X\in P^{-1}HP$, there exists $\epsilon\left(X\right) \in \left\{\pm 1\right\}$ such that $XDX^{-1} = D^{\epsilon\left(X\right)}$ since $P^{-1}NP \unlhd P^{-1}HP$, so this defines a group homomorphism $\epsilon : P^{-1}HP \to \left\{\pm1\right\}$. Since $\left| H \right|$ is odd, $\epsilon$ is trivial. So any $X \in P^{-1}HP$ commutes with elements of $P^{-1}DP$. Since $N$ is a non-trivial subgroup of $\SLl$ of odd order, $D$ has two distinct diagonal entries (otherwise, $D=\pm I_2$ and $-I_2$ has order $2$). Hence, any $X \in P^{-1}HP$ is diagonal and $H$ is abelian. By Lemma~\ref{ab_subgp}, we conclude that $H$ is contained in $\Cs \cap \SLl$ or $\Cns \cap \SLl$ up to conjugacy and this completes the proof since $\Cs \cap \SLl$ and  $\Cns \cap \SLl$ are cyclic.
\end{proof}

\begin{lemma}\label{NsNns}
	Let  $H $ be a subgroup of $\GLl$ such that  $2,\ell\nmid \left| H ^{\circ} \right|$ where $H^{\circ} = H \cap \SLl$. Then, there exists $T\in \GLl$ such that $T^{-1}HT \subseteq\Ns$ or $T^{-1}HT \subseteq\Nns$.
\end{lemma}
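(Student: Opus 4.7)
The plan is to reduce this to the three preceding lemmas by exploiting that $H^{\circ} = H \cap \SL{\Fl}$ is a normal subgroup of $H$ (as it is the preimage of the identity under the determinant restricted to $H$, via $\SL{\Fl} \unlhd \GL{\Fl}$), and then invoke Lemma~\ref{normalizers} on $H^\circ$ sitting inside a Cartan subgroup. The work breaks into a generic case and a degenerate case where $H^\circ$ is central.

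First I would apply Lemma~\ref{cyclic} to $H^\circ$: since $2, \ell \nmid |H^\circ|$ and $H^\circ \subseteq \SL{\Fl}$, we get that $H^\circ$ is cyclic, in particular abelian. Since $\ell \nmid |H^\circ|$, Lemma~\ref{ab_subgp} provides $T_0 \in \GL{\Fl}$ with either $T_0^{-1} H^\circ T_0 \subseteq \Cs$ or $T_0^{-1} H^\circ T_0 \subseteq \Cns$. Replacing $H$ by $T_0^{-1} H T_0$ (which preserves the hypotheses and the conclusion up to further conjugation), I may assume outright that $H^\circ \subseteq \Cs$ or $H^\circ \subseteq \Cns$.

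Now I split into cases. In the main case, assume $H^\circ \not\subseteq \{kI_2 : k \in \Flx\}$. Since $H^\circ \unlhd H$, the group $H$ lies in the $\GL{\Fl}$-normalizer of $H^\circ$, and Lemma~\ref{normalizers} tells me this normalizer is contained in $\Ns$ or $\Nns$ according to which Cartan subgroup hosts $H^\circ$. This gives the desired conclusion with $T = I_2$ (after the earlier conjugation). The degenerate case is $H^\circ \subseteq \{kI_2 : k \in \Flx\}$: then every element of $H^\circ$ is scalar with determinant $1$, i.e.\ $\pm I_2$, and the hypothesis $2 \nmid |H^\circ|$ forces $H^\circ = \{I_2\}$. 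Consequently $\det\colon H \to \Flx$ is injective, so $H$ embeds in the cyclic group $\Flx$; in particular $H$ is abelian and $|H| \mid \ell - 1$, so $\ell \nmid |H|$. A second application of Lemma~\ref{ab_subgp}, now directly to $H$, produces $T \in \GL{\Fl}$ with $T^{-1} H T \subseteq \Cs \subseteq \Ns$ or $T^{-1} H T \subseteq \Cns \subseteq \Nns$.

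The only substantive step is recognizing the degenerate case and handling it separately; once $H^\circ$ is noncentral, the proof is automatic from Lemma~\ref{normalizers} via normality of $H^\circ$ in $H$. I expect no serious obstacle, only bookkeeping to make sure the conjugating matrix $T_0$ from the first use of Lemma~\ref{ab_subgp} gets absorbed correctly and that the degenerate case really is forced by the parity hypothesis $2 \nmid |H^\circ|$, which it is because $\SL{\Fl} \cap \{kI_2\} = \{\pm I_2\}$ has order $2$.
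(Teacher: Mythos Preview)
Your proof is correct and follows essentially the same approach as the paper: split on whether $H^\circ$ is contained in the scalars, handle the non-scalar case via Lemma~\ref{normalizers} and normality of $H^\circ$, and handle the scalar case by showing $H$ is abelian and applying Lemma~\ref{ab_subgp} directly to $H$. The only cosmetic difference is that in the degenerate case you use the parity hypothesis to get $H^\circ=\{I_2\}$ outright, whereas the paper argues that $H^\circ\subseteq\{\pm I_2\}$ is central with cyclic quotient $H/H^\circ\hookrightarrow\Flx$; both yield $H$ abelian.
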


\begin{proof}
	If $H^{\circ} \subseteq \left\{\pm I_{2}\right\}$, then $H$ is abelian since $H/H^{\circ}$ is isomorphic to a subgroup of $ \Flx$ which is cyclic. By Lemma~\ref{ab_subgp}, there exists $T \in \GLl$ such that $T^{-1}HT$ is contained in $\Cs$ or $\Cns$.
	
	If $H^{\circ} \not \subseteq \left\{\pm I_{2}\right\}$, then $H^{\circ} \not \subseteq \left\{ kI_{2}: k\in \Flx\right\}$. So by Lemma~\ref{cyclic},  $H^{\circ}$ is cyclic and by Lemma~\ref{ab_subgp}, there exists $T \in \GLl$ such that $T^{-1} H^{\circ} T$ is contained in $\Cs$ or $\Cns$. Let $\mathscr{N}_{\GLl}\left(T^{-1}H^{\circ}T\right)$ be the normalizer of $T^{-1}H^{\circ}T$ in $\GLl$. Then, since $T^{-1}H^{\circ}T \not \subseteq \left\{ kI_{2}: k\in \Flx\right\}$, Lemma~\ref{normalizers} implies that $ \mathscr{N}_{\GLl}\left(T^{-1}H^{\circ}T\right) \subseteq \Nns \text{ or }\Ns$.  Since $H^{\circ} \unlhd H$, we have that $ T^{-1}HT \subseteq \mathscr{N}_{\GLl}\left(T^{-1}H^{\circ}T\right)$, which completes the proof.
\end{proof}

At last, the following elementary lemma will be useful.

\begin{lemma}\label{SL}
	$\SLl$ is generated by $U=\begin{psmallmatrix}1&1\\0&1\end{psmallmatrix}$ and $U^{t}=\begin{psmallmatrix}1&0\\1&1\end{psmallmatrix}$.
\end{lemma}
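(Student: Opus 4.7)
The plan is to prove $H = \SLl$ for $H := \langle U, U^{t} \rangle$ via an orbit--stabilizer argument applied to the natural left action of $\SLl$ on the set of nonzero column vectors $X := \Fl^{2} \setminus \{0\}$. The starting observation is that for every $k \in \Fl$ one has $U^{k} = \begin{psmallmatrix}1 & k \\ 0 & 1\end{psmallmatrix}$ and $(U^{t})^{k} = \begin{psmallmatrix}1 & 0 \\ k & 1\end{psmallmatrix}$, so $H$ already contains all upper and lower unipotent matrices. In particular the full $\SLl$-stabilizer of $e_{1} := \begin{psmallmatrix}1\\0\end{psmallmatrix}$, which is exactly $\{U^{k}: k\in\Fl\}$ and has order $\ell$, is contained in $H$.

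For transitivity, I would exhibit for each nonzero vector $\begin{psmallmatrix}a\\b\end{psmallmatrix}$ an explicit element of $H$ sending $e_{1}$ to it. When $b \neq 0$, the chain $(U^{t})^{b}e_{1} = \begin{psmallmatrix}1\\b\end{psmallmatrix}$ followed by $U^{(a-1)/b}\begin{psmallmatrix}1\\b\end{psmallmatrix} = \begin{psmallmatrix}a\\b\end{psmallmatrix}$ settles the case directly. When $b = 0$ and $a \neq 0$, I would first pass to a vector with nonzero second coordinate via $e_{1} \mapsto U^{t}e_{1} = \begin{psmallmatrix}1\\1\end{psmallmatrix} \mapsto U^{-1}\begin{psmallmatrix}1\\1\end{psmallmatrix} = \begin{psmallmatrix}0\\1\end{psmallmatrix}$, then apply $U^{a}$ to reach $\begin{psmallmatrix}a\\1\end{psmallmatrix}$, and finally $(U^{t})^{-1/a}$ (well-defined since $a \in \Flx$) to arrive at $\begin{psmallmatrix}a\\0\end{psmallmatrix}$. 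These are straightforward linear computations over $\Fl$.

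Combining transitivity with the inclusion of the stabilizer, the orbit--stabilizer theorem yields $|H| \ge |X| \cdot |\{U^{k}: k\in\Fl\}| = (\ell^{2}-1)\ell = |\SLl|$, and therefore $H = \SLl$. There is no genuine obstacle in this strategy; the only care required is the minor case split in the transitivity step to handle vectors with zero second coordinate. An alternative approach via Gauss elimination -- reducing an arbitrary $A \in \SLl$ to the identity by left multiplications by powers of $U$ and $U^{t}$ -- would also work, but it requires an explicit expression of the diagonal $\SLl$-matrices as words in $U$ and $U^{t}$, a complication that the orbit--stabilizer route neatly bypasses.
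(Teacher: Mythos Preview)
Your proof is correct, and it takes a genuinely different route from the paper. The paper proceeds by explicit factorization in the spirit of Gauss elimination: it first writes the antidiagonal matrices $\begin{psmallmatrix}0&-s^{-1}\\s&0\end{psmallmatrix}$ and the diagonal matrices $\begin{psmallmatrix}s&0\\0&s^{-1}\end{psmallmatrix}$ as explicit words in $U$ and $U^{t}$, and then, splitting on whether the $(1,1)$- or the $(2,1)$-entry of a given $A\in\SLl$ is nonzero, decomposes $A$ as a product of unipotent, diagonal, and antidiagonal factors. This is exactly the ``alternative approach'' you sketch at the end. Your orbit--stabilizer argument trades that explicit bookkeeping for a clean counting step: once transitivity on $\Fl^{2}\setminus\{0\}$ and the containment of the full $\SLl$-stabilizer of $e_{1}$ are established, the equality $|H|=|\SLl|$ is immediate. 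The paper's approach has the advantage of producing an explicit word in $U,U^{t}$ for every element of $\SLl$, which can be useful if one later needs such expressions; your approach is shorter and avoids the auxiliary identities for diagonal and antidiagonal matrices, at the cost of being non-constructive.
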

\begin{proof}

First, we note that for each $s\in\Fl$,
$$\begin{pmatrix}1&s\\0&1\end{pmatrix}=U^s, \text{ and } \begin{pmatrix}1&0\\s&1\end{pmatrix}=(U^t)^s,$$
and for any non-zero $s\in \Fl$,
	\begin{align*}
		\left(\begin{matrix}0&-s^{-1}\\s&0\end{matrix}\right)
		&= \left(\begin{matrix}1&-s^{-1}\\0&1\end{matrix}\right) \left(\begin{matrix}1&0\\s&1\end{matrix}\right) \left(\begin{matrix}1&-s^{-1}\\0&1\end{matrix}\right) \text{ and }\\
		\left(\begin{matrix}s&0\\0&s^{-1}\end{matrix}\right) &= \left(\begin{matrix}1&0\\-1&1\end{matrix}\right) \left(\begin{matrix}1&1\\0&1\end{matrix}\right) \left(\begin{matrix}1&0\\-1&1\end{matrix}\right) \left(\begin{matrix}0&-s^{-1}\\s&0\end{matrix}\right).
	\end{align*}
	For any $\begin{psmallmatrix}a&b\\c&d\end{psmallmatrix}\in \SLl$, at least one of $a$ and $c$ is non-zero. If $a\ne 0$, then $$
		\left(\begin{matrix}a&b\\c&d\end{matrix}\right)
		= \left(\begin{matrix}1&0\\ca^{-1}&1\end{matrix}\right) \left(\begin{matrix}a&b\\0&d-bca^{-1}\end{matrix}\right)
		= \left(\begin{matrix}1&0\\ca^{-1}&1\end{matrix}\right) \left(\begin{matrix}a&b\\0&a^{-1}\end{matrix}\right)
		= \left(\begin{matrix}1&0\\ca^{-1}&1\end{matrix}\right) \left(\begin{matrix}1&ab\\0&1\end{matrix}\right) \left(\begin{matrix}a&0\\0&a^{-1}\end{matrix}\right).
	$$ If $c\ne 0$, then $$
		\left(\begin{matrix}a&b\\c&d\end{matrix}\right)
		= \left(\begin{matrix}a&\left(ad-1\right)c^{-1}\\c&d\end{matrix}\right)
		= \left(\begin{matrix}1&ac^{-1}\\0&1\end{matrix}\right) \left(\begin{matrix}0&-c^{-1}\\c&d\end{matrix}\right)
		= \left(\begin{matrix}1&ac^{-1}\\0&1\end{matrix}\right) \left(\begin{matrix}1&0\\-cd&1\end{matrix}\right)\left(\begin{matrix}0&-c^{-1}\\c&0\end{matrix}\right).
	$$
\end{proof}

\subsection{The proof of Proposition~\ref{woRCM_main}}

First, we consider the case when $\ell\geq 5$ is a prime and an elliptic curve $E/K$ contains a torsion point $R$ of order $\ell$ such that $2\nmid \left[K\left(R\right):K\right]$.

\

Recall that $ K(E[\ell])$ contains  a primitive $\ell$th root of unity $\zeta_\ell$.


\begin{prop}\label{classify}
	Let $K$ be a number field, $E/K$  be an elliptic curve  over $K$, and $\ell \ge 5$ be a prime. If there exists $R\in E\left[\ell\right]-\left\{O\right\}$ of $E$ such that $2\nmid \left[K\left(R\right):K\right]$.
Then, there is a basis $\calB$ of $E\left[\ell\right]$ such that $G\left(\calB\right)$ is contained in $\Bl$, $\Ns$, or $\Nns$.
\end{prop}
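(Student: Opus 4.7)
The plan is to start by picking a basis $\calB_{0}=\left\{R,Q\right\}$ of $E\left[\ell\right]$ with $R$ as the first vector, so that $R$ corresponds to the row vector $\left(1,0\right)$, and to analyze $G:=G\left(\calB_{0}\right)$. By Lemma~\ref{easy}(a) the index $\left[G:G_{1\,0}\right]=\left[K\left(R\right):K\right]$ is odd, and from the identity $\left[G:G_{1\,0}\right]=\left[G:G_{1\,0}G^{\circ}\right]\cdot\left[G^{\circ}:G_{1\,0}^{\circ}\right]$ (a standard double-coset computation using $G_{1\,0}^{\circ}=G_{1\,0}\cap G^{\circ}$) it follows that the $G^{\circ}$-orbit size $\left[G^{\circ}:G_{1\,0}^{\circ}\right]$ of $R$ is odd as well.

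I then split into two cases according to whether $\ell\mid\left|G^{\circ}\right|$, equivalently whether $G^{\circ}$ contains a transvection. In the first case (no transvection), Lemma~\ref{easy}(d) forces $G_{1\,0}^{\circ}$ to be trivial (since its order divides $\ell$), so $\left|G^{\circ}\right|$ equals the odd orbit size and $2,\ell\nmid\left|G^{\circ}\right|$. Then Lemma~\ref{NsNns} produces $T\in\GLl$ with $T^{-1}GT$ contained in $\Ns$ or $\Nns$, and replacing $\calB_{0}$ by the basis obtained via $T$ yields the desired basis.

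The second case (a transvection is present) is where the real work lies. The key claim is that every transvection in $G^{\circ}$ has the same fixed line $L$. Indeed, if two transvections $\tau_{1},\tau_{2}\in G^{\circ}$ had distinct fixed lines, then taking those lines as coordinate axes would realize them in that basis as $\begin{psmallmatrix}1&0\\ \ast&1\end{psmallmatrix}$ and $\begin{psmallmatrix}1&\ast\\ 0&1\end{psmallmatrix}$, and Lemma~\ref{SL} would force $\left\langle \tau_{1},\tau_{2}\right\rangle =\SLl$, hence $G^{\circ}=\SLl$. But then the $G^{\circ}$-orbit of $R$ would be all of $\Fl^{2}-\left\{0\right\}$, of even cardinality $\ell^{2}-1$, contradicting the odd orbit size. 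Once the common fixed line $L$ is established, for any $g\in G$ the normality of $G^{\circ}$ in $G$ gives $g^{-1}\tau g\in G^{\circ}$, which is again a transvection whose fixed line is $L\cdot g$; hence $L\cdot g=L$, so $G$ stabilizes $L$. Choosing any basis in which $L$ is the second coordinate axis then places $G$ inside $\Bl$.

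The hardest step is this second case: simultaneously using Lemma~\ref{SL} and the odd-orbit constraint to force a unique fixed line for the transvections, and then promoting the stabilization from $G^{\circ}$ to $G$ via normality. Everything else reduces to careful bookkeeping with the changes of basis.
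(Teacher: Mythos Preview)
Your proof is correct and follows essentially the same approach as the paper: both split according to whether $\ell\mid\left|G\right|$, invoke Lemma~\ref{NsNns} in the $\ell\nmid\left|G\right|$ case after showing $\left|G^{\circ}\right|$ is odd, and in the $\ell\mid\left|G\right|$ case use Lemma~\ref{SL} together with the odd-index hypothesis to rule out $G^{\circ}=\SLl$ and thereby force a single $G$-invariant line. The paper phrases the latter step as ``$K_{0}\left(R\right)/K_{0}$ is normal'' and then passes to a basis containing $U$, whereas you phrase it directly as ``all transvections in $G^{\circ}$ share a fixed line'' and conjugate to $\Bl$ via that line; your formulation is slightly cleaner in that it avoids any ambiguity about whether the basis $\calB$ with $U\in G\left(\calB\right)$ still has the distinguished point $R$ as a basis vector.
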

\begin{proof} 
	We let  $K_{0} = K\left(\zeta_{\ell}\right)$. First, we show that the extension $K_{0}\left(R\right)$ is normal over $K_{0}$. Suppose not. First, note that for any $P \in E\left[\ell\right] -\left\langle R\right\rangle$, the set $\left\{ P,R\right\}$ is a basis for $E\left[\ell\right]$.  We fix a basis $\calB':=\left\{ P,R\right\}$ and let $H := G\left(\calB'\right)$. Then since $K_{0}\left(R\right)$ is not normal over $K_{0}$, $H_{01}^{\circ} \subseteq H^{\circ}$ is not a normal subgroup. So $H_{01}^{\circ}$  is non-trivial and there exists $A \in H^{\circ} - \mathscr{N}_{H^{\circ}}\left(H_{01}^{\circ}\right)$, where $\mathscr{N}_{H^{\circ}}\left(H_{01}^{\circ}\right)$ is the normalizer of $H_{01}^{\circ}$ in $H^{\circ}$. Moreover, since $H_{01}^{\circ}$ is a non-trivial subgroup of $\left\langle U \right\rangle$ of order $\ell$ by Lemma~\ref{easy}(d), we see that $H_{01}^{\circ} = \left\langle U \right\rangle$.  Then,  $\rho_{\calB'}^{-1}\left(AUA^{-1}\right)$ fixes $R^{\rho_{\calB'}^{-1}\left(A\right)}$ but not $R$. In fact, if it fixes $R$, then it must be the identity element, which is not true. Hence, we set a new basis $\left\{R^{\rho_{\calB'}^{-1}\left(A\right)},R\right\}$  for $E\left[\ell\right]$ and let  $\Gamma := G\left(\left\{R^{\rho_{\calB'}^{-1}\left(A\right)},R\right\}\right)$. Figure~\ref{fig-2} below shows the diagrams of these extensions over $K$ and their corresponding Galois groups when $K_0(R)$ is not a normal extension of $K_{0}$. Since  $AUA^{-1}$ has order $\ell$, we conclude that $\Gamma$ and so $\Gamma^{\circ}$ contains both $U$ and $U^{t}$.

	{\tiny 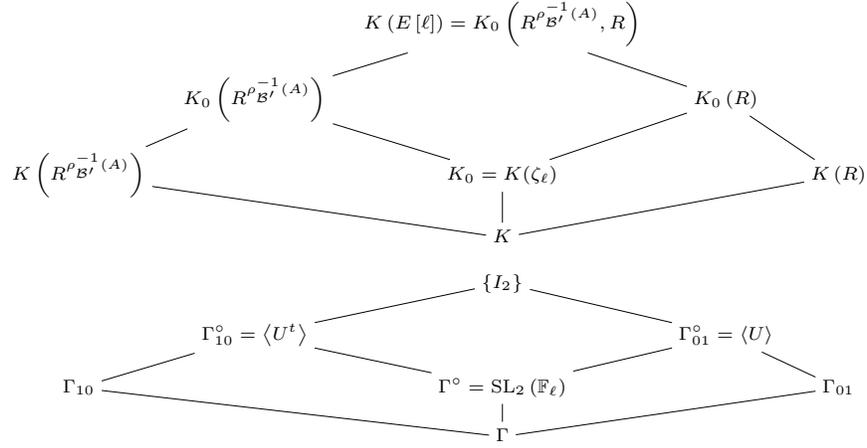
\begin{figure}[h!]\begin{tikzcd}[cramped, sep=small]
                                                                       &                                                                                       & {K\left(E\left[\ell\right]\right) = K_{0}\left(R^{\rho_{\calB'}^{-1}\left(A\right)} ,R \right)} &                                                                        &                                     \\
                                                                       & {K_{0}\left(R^{\rho_{\calB'}^{-1}\left(A\right)}\right)} \arrow[ru, no head] &                                                                                                 & {K_{0}\left(R\right)} \arrow[lu, no head]                     &                                     \\
K\left(R^{\rho_{\calB'}^{-1}\left(A\right)}\right) \arrow[ru, no head] &                                                                                       & K_{0}=K(\zeta_\ell) \arrow[ru, no head] \arrow[lu, no head]                                                   &                                                                        & K\left(R\right) \arrow[lu, no head] \\
                                                                       &                                                                                       & K \arrow[u, no head] \arrow[llu, no head] \arrow[rru, no head]                                  &                                                                        &                                     \\
                                                                       &                                                                                       & \left\{ I_{2} \right\}                                                                                               &                                                                        &                                     \\
                                                                       & \Gamma_{10}^{\circ} =\left\langle U^{t} \right\rangle \arrow[ru, no head]             &                                                                                                 & \Gamma^{\circ}_{01} = \left\langle U \right\rangle \arrow[lu, no head] &                                     \\
\Gamma_{10} \arrow[ru, no head]                                        &                                                                                       & \Gamma^{\circ} = \SLl \arrow[lu, no head] \arrow[ru, no head]                                   &                                                                        & \Gamma_{01} \arrow[lu, no head]     \\
                                                                       &                                                                                       & \Gamma\arrow[rru, no head] \arrow[u, no head] \arrow[llu, no head]                             &                                                                        &                                    
\end{tikzcd}
\caption{The diagrams of subfields of $K\left(E\left[\ell\right]\right)$ containing $K$ and their corresponding Galois groups in $\Gal\left( K\left(E\left[\ell\right]\right)/ K\right)$ when $K_0(R)$ is not normal over~$K_{0}$}
\label{fig-2}
\end{figure}}
 \noindent Thus,  $\Gamma^{\circ} = \SLl$ by Lemma~\ref{SL}, and $2  ~\big|~ \ell^{2}-1 = \left[\Gamma^{\circ}:\Gamma^{\circ}_{01}\right] ~ \big|~ \left[\Gamma:\Gamma_{01}\right] = \left[K\left(R\right):K\right]$, which is a contradiction to our assumption. Therefore, $K_{0}\left(R\right)$ is normal over $K_{0}$.
 
	Now, we consider two cases. First, if $\ell ~\big|~ \left[K\left(E\left[\ell\right]\right):K\right]$, then there exists basis $\calB$ for $E\left[\ell\right]$ such that $U \in G := G\left(\calB\right)$. Referring to Figure~\ref{fig-2}, since $\left\langle U \right\rangle = G_{01}^{\circ} \unlhd G^{\circ}$, we have that $U \in G^{\circ} \subseteq \mathscr{N}_{\GLl}\left(\left\langle U \right\rangle\right) = \Bl$. Since $ G^{\circ} \unlhd G$, we have that $G \subseteq \mathscr{N}_{\GLl}\left(G^{\circ}\right)$ and  $\mathscr{N}_{\GLl}\left(G^{\circ}\right)= \Bl$, since $U \in G^{\circ}$.
	
	Second, if $\ell \nmid \left[K\left(E\left[\ell\right]\right):K\right]$, we fix a basis  $\calB' :=\left\{P,R\right\}$ for $E\left[\ell\right]$ for some $P \in E\left[\ell\right] - \left\langle R \right\rangle$ and  let $H  := G\left(\calB'\right)$. Then, $\ell \nmid \left| H^\circ \right|$ and  $H^{\circ}_{01} =\left\{I_{2}\right\}$  by Lemma~\ref{easy}(d) since $\left| H ^{\circ}_{01} \right| \mid \ell$ but $\ell\nmid |H|$. Hence,  $H^\circ\cap H_{01}=\left\{I_{2}\right\}$ and the semi-direct product $H^{\circ} \rtimes H_{01}$ is a subgroup of $H$ and $\left| H ^{\circ} \right|$ divides $\left[H:H_{01}\right]$. Since $\left[H:H_{01}\right]$ is odd, so is $\left| H ^{\circ} \right|$. So by Lemma~\ref{NsNns}, there exists $T \in \GLl$ such that $T^{-1} H T$ is contained in $\Ns$ or $\Nns$. Therefore, there is a basis $\calB$ for $E\left[\ell\right]$ such that $G\left(\calB\right)$ is contained in $\Ns$ or $\Nns$.
\end{proof}

%

Next, we observe a sufficient condition for  the degree $\left[K\left(R\right):K\right]$ to be divisible by $2$ or $3$ for a point $R \in E\left[\ell\right] - \left\{ O \right\}$ where  $\ell\geq 11$  is an unramified prime in $K$,  by applying Genao's result \cite[Theorem~2]{G22} which describes the image of the inertia group based on the deduction types.

\begin{prop}\label{not_Bl}
	Let $K$ be a number field and $E/K$ be an elliptic curve over $K$. Suppose $\ell\ge 11$ is a prime satisfying the following two conditions: \begin{enumerate}[\normalfont (i)]
		\item $\bbq\left(\zeta_{\ell}\right) \cap K = \bbq$.
\item $\ell$ is unramified in $K$.
	\end{enumerate}
	If  $ G\left(\calB\right)$ is contained in $\Ns$, or in $\Nns$ but not $\Cs$ for some basis $\calB$ of $E\left[\ell\right]$, then for any point $S \in E\left[\ell\right] - \left\{ O \right\}$, the degree $\left[K\left(S\right):K\right]$ is divisible by $2$ or $3$.
\end{prop}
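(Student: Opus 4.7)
The plan is to translate the claim into one about the index $[G : G_{c,d}]$, where $G := G(\calB)$ and $S = [c]P + [d]Q$, via Lemma~\ref{easy}(a); to combine an explicit stabilizer computation in $\Ns$ and $\Nns$ with the identity $[G : G^\circ] = \ell - 1$ (Lemma~\ref{easy}(b) and condition (i)); and to invoke Genao's inertia classification \cite[Theorem~2]{G22} via condition (ii) only in the single residue class $\ell \equiv 11 \pmod{12}$ where the elementary argument does not suffice.

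First I would compute $G_{c,d}$ explicitly. For $G \subseteq \Ns$ with $G \not\subseteq \Cs$: when $c = 0$ or $d = 0$, no anti-diagonal element of $\Ns \setminus \Cs$ fixes $(c,d)$, so $G_{c,d} \subseteq G \cap \Cs$ and $2 = [G : G \cap \Cs]$ already divides $[K(S):K]$; when $c, d \neq 0$, the only anti-diagonal of $\GLl$ that can fix $(c,d)$ is $M_0 := \begin{psmallmatrix}0 & d/c \\ c/d & 0\end{psmallmatrix}$, and no non-identity diagonal fixes $(c,d)$, so $|G_{c,d}| \in \{1, 2\}$. For $G \subseteq \Nns$, an analogous direct computation shows that the stabilizer of any nonzero $(c,d)$ in $\Nns$ has order exactly $2$, so again $|G_{c,d}| \in \{1, 2\}$.

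In the remaining sub-cases, $|G_{c,d}| \leq 2$. If $|G_{c,d}| = 1$, then $[K(S):K] = |G|$ is divisible by $\ell - 1 \geq 10$, hence by $2$. If $|G_{c,d}| = 2$, then $[K(S):K] = (\ell-1)|G^\circ|/2$, and a residue analysis modulo $12$ shows that $(\ell-1)/2$ is divisible by $2$ when $\ell \equiv 1 \pmod 4$ and by $3$ when $\ell \equiv 1 \pmod 3$. Thus the only remaining case is $\ell \equiv 11 \pmod{12}$, in which $(\ell-1)/2$ is coprime to $6$ and the problem reduces to showing that $|G^\circ|$ is divisible by $2$ or $3$.

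For this last case, I would fix a prime $\fkp$ of $K$ above the unramified prime $\ell$ (condition (ii)) and apply \cite[Theorem~2]{G22} to describe the inertia image $I_\fkp \subseteq G$ by the reduction type of $E$ at $\fkp$. Multiplicative reduction is automatically ruled out because $|\Ns|$ and $|\Nns|$ are coprime to $\ell$, so $G$ contains no unipotent of order $\ell$. For each of the remaining reduction types (good ordinary, good supersingular, additive), a compatibility check between Genao's inertia description and the constraint $G \subseteq \Ns$ or $\Nns$ produces an element of $G^\circ$ of order divisible by $2$ or $3$; for instance, in the supersingular case compatible with $\Nns$, the $(\ell-1)$-th power of the level-$2$ fundamental character has image inside $\Cns \cap \SLl \subseteq G^\circ$ of order dividing $\ell + 1$, and $\ell + 1 \equiv 0 \pmod{12}$ when $\ell \equiv 11 \pmod{12}$. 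The main obstacle will be the additive case, where one must verify for each Kodaira reduction type that the tame-inertia-derived elements supplied by Genao actually lie in $G^\circ$ (i.e., have determinant $1$) rather than merely in $G$.
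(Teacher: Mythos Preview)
Your strategy---stabilizer computation, then the identity $[G:G^\circ]=\ell-1$ from condition~(i), then a residue reduction to $\ell\equiv 11\pmod{12}$, then Genao---parallels the paper closely. The paper treats the $\Nns$ and $\Ns$ cases separately rather than through a unified residue analysis, but the content is the same: for $\Nns$ the paper uses $\Cns^e\subseteq G$ from \cite[Theorem~2]{G22} directly (the $\diagexp{0,e}$ alternative being excluded by an eigenvalue check), and your approach recovers this once you note that the stabilizer of any nonzero vector in $\Nns$ has order at most~$2$.

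The gap is in the $\Ns$ case with $\ell\equiv 11\pmod{12}$. You propose to produce, reduction-type by reduction-type, an explicit element of $G^\circ$ of order divisible by $2$ or $3$. But when Genao supplies (a conjugate of) $\diagexp{0,e}$ with $e\in\{2,4,6\}$, conjugation by an anti-diagonal element of $G$ also puts $\diagexp{e,0}$ in $G$, and the only obvious determinant-$1$ combination is $\diagexp{-e,e}$, whose order is $(\ell-1)/\gcd(e,\ell-1)=(\ell-1)/2$---coprime to $6$ here. No power of Genao's element alone lands nontrivially in $\SLl$. So the obstacle you flag is genuine, and it is not resolved by a Kodaira-type case check.

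The paper's fix is structural rather than element-by-element. First, since $\ell\nmid|\Ns|$, the inertia image $\rho(I_{K_\fkp})$ is cyclic (tame); if it were generated by some $B\in\Ns\setminus\Cs$, then $B$-conjugation would force the non-cyclic group $\Cs^e$ into $\rho(I_{K_\fkp})$, and comparing $|\Cs^e|=\bigl((\ell-1)/\gcd(e,\ell-1)\bigr)^2$ against $|B|\mid 2(\ell-1)$ forces $\ell\in\{3,7,19\}$. Hence $\rho(I_{K_\fkp})\subseteq\Cs$, so $\rho(I_{K_\fkp})\subseteq\Delta:=G\cap\Cs$. Second---and this is the ingredient your plan omits---conditions (i) and (ii) together make $K(\zeta_\ell)/K$ totally ramified at~$\fkp$, so $\det\rho(I_{K_\fkp})=\Fl^\times$ and therefore $(\ell-1)\mid|\Delta|$. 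Since $|G^\circ|=2|\Delta|/(\ell-1)$, this immediately gives $2\mid|G^\circ|$. The determinant surjectivity of inertia, not a search for a specific element, is what closes the argument.
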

\begin{proof}	
	First, suppose $G:= G\left(\calB\right) \subseteq \Nns$. If $A \in \Nns$ has an eigenvalue $1$, then the other eigenvalue of $A$ is $1$ or $-1$. So for any non-zero row vector $\left(c\,d\right) \in \Fl^{2}$, $G_{c\,d}$ is contained in $\left\{\begin{psmallmatrix} \pm1&0\\0&1\end{psmallmatrix} \right\}$ or $\left\{\begin{psmallmatrix} 1&0\\0&\pm1\end{psmallmatrix} \right\}$. Moreover, $\Cns \cap G_{c\,d} = \left\{ I_{2}\right\}$. By Lemma~\ref{NsNns}, we know that $\Cns^{e}:=\{A^e : A\in \Cns\}$  is a normal subgroup of $G$ since $G \subseteq \Nns$, and by \cite[Theorem~2]{G22} for some $e\in \left\{ 1,2,3,4,6 \right\}$, $$
		2 ~\big|~ (\ell^{2}-1)/e = \left| \Cns^{e} \right| = \left[ \Cns^{e} : \Cns^{e} \cap G_{cd}\right] = \left[ \Cns^{e}G_{cd} : G_{cd}\right] ~\big|~ \left[ G : G_{cd} \right].
	$$ This implies that $\left[K\left(S\right):K\right]$ is divisible by $2$, since $\left[K\left(S\right):K\right]=\left[ G : G_{cd} \right]$ for some nonzero vector $(c\,d)\in \Fl^{2}$.

	Now, suppose $G$ is contained in $\Ns$ but not $\Cs$. The index of the subgroup $\Delta := G \cap \Cs$ in $G$ is $2$. By direct computation, we can show that $G_{01}$ and $G_{10}$ are contained in $\Delta$. So $\left[G:G_{10}\right] = \left[G:\Delta\right] \left[\Delta:G_{10}\right]$, which is even. For any non-zero $s \in \Fl$, direct computation shows that $\left| G_{1s} \right| \mid 2= \left[ G:\Delta\right]$ and $|\Delta|\mid [G:G_{1 s}]$. Thus, it is enough to show that $\left| \Delta \right|$ is divisible by $2$ or $3$.  The assumption (i) implies that there exists $A \in G$ such that $\det A =\alpha$, where $\alpha$ is a generator of $\Fl^\times$.

If $\ell \equiv 1\pmod{4}$, then $2 \mid (\ell-1)/2$ and  $| A^{2}| ~\big|~ \left| \Delta \right|$, so $2 \mid \left| \Delta \right|$.

If $\ell \equiv 3\pmod{4}$,  we first claim that  $\ell = 19$ or  $\rho\left(I_{K_{\mathfrak{p}}}\right) \subseteq \Cs$ where $K_{\mathfrak{p}}$ is the completion of $K$ at a prime  $\mathfrak{p}$ above $\ell$, $I_{K_{\mathfrak{p}}}$ is the inertia group of $\mathfrak{p}$, and $\rho = \rho_{\calB}$, following the proof of \cite[Theorem~2]{G22}.
Suppose not, i.e., suppose that $\ell \ne 19$ and  $\rho\left(I_{K_{\mathfrak{p}}}\right)$ is contained in $\Ns$ but not $\Cs$. Since $I_{K_{\mathfrak{p}}}$ is cyclic, there exists an element $B \in \Ns -\Cs$ which generates $\rho\left(I_{K_{\mathfrak{p}}}\right)$. By \cite[Theorem~2]{G22}, $\rho\left(I_{K_{\mathfrak{p}}}\right)$ contains $T\diagexp{0,e}T^{-1}$ for some $e \in \left\{1,2,3,4,6\right\}$ and for some $T\in \GLl$. If $T\diagexp{0,e}T^{-1} \in G \subseteq \Ns$, then $T\diagexp{0,e}T^{-1}$ is $\diagexp{0,e}$ or $\diagexp{e,0}$. If $T\diagexp{0,e}T^{-1} = \diagexp{0,e}$, we have that $\diagexp{e,0} = B\diagexp{0,e}B^{-1} \in \rho\left( I_{K_{\frak{p}}}\right)$ and $\Cs^{e} = \left\{ A^{e}: A\in \Cs \right\} \subseteq \rho\left( I_{K_{\frak{p}}}\right)$. If $T\diagexp{0,e}T^{-1} = \diagexp{e,0}$, by the same argument, we can show that $\Cs^{e} \subseteq \rho\left( I_{K_{\frak{p}}}\right)$. Hence,  since  $B^{2} \in \Cs$ and $B^{2\left(\ell-1\right)} = I_{2}$, we have that $$
		\left(\frac{\ell-1}{\gcd\left(\ell-1,e\right)}\right)^{2} = \left| \Cs^{e} \right| ~\big|~ \left| \rho\left(I_{K_{\mathfrak{p}}}\right) \right| = \left| B\right| ~\big|~ 2\left(\ell-1\right),
	$$
which implies that $\ell-1$ divides $2e^{2}$. Thus, $\ell-1$ divides $32$ or $72$. Since  $\ell-1 \equiv 2 \pmod{4}$, we have that $\ell \in \left\{ 3, 7, 19 \right\}$. But this contradicts that $\ell \ge 11$ and $\ell\ne 19$. Thus, $\ell = 19$ or $\rho\left(I_{K_{\mathfrak{p}}}\right) \subseteq \Cs$. If $\ell=19$, then $3 \mid \frac{\ell-1}{\gcd\left(\ell-1,e\right)}= \left| \diagexp{0,e} \right| ~\big|~ \left| \Delta \right|$. If $\rho\left(I_{K_{\mathfrak{p}}}\right) \subseteq \Cs$, then we have that $$
		2 \mid \ell -1 = \left| \Flx \right| = \left| \left(\det \circ \rho\right)\left(I_{K_{\frak{p}}}\right) \right| ~\big|~ \left| \Delta \right|,
	$$ since the assumption (i) implies that $\left(\det \circ \rho\right)\left(I_{K_{\frak{p}}}\right) = \Flx$. This completes the proof.
\end{proof}

\begin{proof}[Proof of Proposition~\ref{pre_main}]
	There is a constant $N'_{K}$ depending only on $K$ such that any prime $\ell > N'_{K}$ satisfies the conditions (i) and (ii) of Proposition~\ref{not_Bl}. Then, the proof follows from  Proposition~\ref{classify} and Proposition~\ref{not_Bl}.
\end{proof}

Now, we characterize when $G\left(\calB\right) \subseteq \Bl$ up to conjugacy for a sufficiently large prime $\ell$ if $K$ has no RCM. We recall the following result which gives a lower bound of such a prime $\ell$ depending on $K$.

\begin{thm}[{\cite[Theorem~1]{LV14}}]\label{LV14}
	Let $K$ be a number field. Then, there exists a finite set $S_{K}$ of primes depending only on $K$ such that for a prime $\ell \not \in S_{K}$, and an elliptic curve $E/K$ for which $E\left[\ell\right] \otimes \overline{\Fl}$ is reducible with associated character $\psi$ of  degree $1$, one of the following holds:\begin{enumerate}[\arabic*.]
		\item There exists an elliptic curve $\mathscr{E}/K$  whose CM field is contained in $ K$, with an $\ell$-adic degree $1$ associated character whose mod-$\ell$ reduction $\phi$ satisfies: $$
			\psi^{12} = \phi^{12}
		$$
		\item GRH fails for $K\left[\sqrt{-\ell}\right]$ and $\psi^{12} = \cyc^{6}$, where $\cyc$ denotes an $\ell$-cyclotomic character of $\Gal\left(\overline{\bbq}/\bbq\right)$.

	\end{enumerate}
(Refer to \cite[\S1]{LV14} for the associated characters of degree $d$.)
\end{thm}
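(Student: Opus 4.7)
The plan is to follow the Serre--Momose isogeny-character method and refine the dichotomy of Theorem~\ref{M95A} by identifying the two surviving regimes as either ``$\psi^{12}$ coming from a CM elliptic curve over $K$'' or ``$\psi^{12}=\cyc^6$, an occurrence that is bounded unless GRH fails for $K[\sqrt{-\ell}]$.'' Since $E[\ell]\otimes \overline{\Fl}$ is reducible, pick a Galois-stable line $V$ with character $\psi:\Gal(\overline{K}/K)\to\overline{\Fl}^{\times}$; the Weil pairing forces the quotient character to be $\cyc\cdot\psi^{-1}$. The target is to pin down $\psi^{12}$ from its local inertial behavior at every prime of $K$, realize it globally as an algebraic Hecke character via class field theory, and then either match it with the Hecke character of an actual CM elliptic curve defined over $K$ or trap it in the exceptional $\cyc^{6}$ case.

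First I would bound the ramification of $\psi^{12}$ outside $\ell$: for a prime $\fkp\nmid \ell$, the N\'eron--Ogg--Shafarevich criterion together with Grothendieck's semistable reduction theorem show that $\psi|_{I_{\fkp}}$ factors through a cyclic quotient of order dividing $12$, since at primes of multiplicative reduction the action on $E[\ell]$ is controlled by the Tate uniformization. Hence $\psi^{12}$ is unramified outside $\ell$. For a prime $\fkL$ of $K$ above $\ell$, I would invoke Raynaud's classification of finite flat group schemes of type $(\ell,\dots,\ell)$: the restriction $\psi|_{I_{\fkL}}$ is a product of tame fundamental characters of level $1$ or $2$, and after raising to the 12th power only level-one contributions survive, so $\psi^{12}|_{I_{\fkL}}=\cyc^{a_{\fkL}}$ for some $a_{\fkL}\in\{0,6,12\}$.

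The global step is then to assemble this local data. By class field theory, $\psi^{12}$ corresponds to an algebraic Hecke character of $K$ whose infinity type is recorded by the tuple $(a_{\fkL})_{\fkL\mid \ell}$, modulo a finite ambiguity controlled by the narrow ray class group. If each $a_{\fkL}\in\{0,12\}$, then after possibly replacing $\psi$ by $\cyc^{-1}\psi$ the character $\psi^{12}$ is everywhere unramified, and standard results on algebraic Hecke characters let me match it (up to a class-group twist absorbed into the finite exceptional set $S_K$) with the mod-$\ell$ reduction of an $\ell$-adic degree-one character $\phi$ associated to a CM elliptic curve $\scrE/K$ whose CM field lies inside $K$, giving case~(1). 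In the remaining ``mixed'' regime, some $a_{\fkL}=6$, which forces $\psi^{12}=\cyc^{6}$; the resulting Hecke character naturally takes values in $K[\sqrt{-\ell}]$, and an effective Chebotarev argument applied to $K[\sqrt{-\ell}]/K$ bounds the prime $\ell$ in terms of $K$ alone unless GRH fails for $K[\sqrt{-\ell}]$, landing in case~(2).

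The main obstacle I expect is the final CM identification: one must promote the abstract algebraic Hecke character $\psi^{12}$ to an \emph{actual} CM elliptic curve $\scrE/K$ whose associated degree-one $\ell$-adic character reduces to it. This rests on the finiteness of $K$-isomorphism classes of CM elliptic curves with CM field inside $K$ (from the Main Theorem of Complex Multiplication) together with uniform, $\ell$-independent control over the twist primes coming from discriminants, conductors, and torsion of these CM curves; absorbing this auxiliary data, plus the narrow-class-group ambiguity in the class field theory step, into a single finite set $S_K$ depending only on $K$ is the delicate part that prevents a purely local argument from closing out the theorem.
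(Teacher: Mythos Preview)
The paper does not prove this statement at all: Theorem~\ref{LV14} is quoted verbatim as \cite[Theorem~1]{LV14} and used as a black box in the proof of Proposition~\ref{Bl}. There is therefore no ``paper's own proof'' to compare your proposal against.

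That said, your outline is a faithful sketch of the Momose--Larson--Vaintrob strategy and is broadly on the right track. A few points where your proposal is imprecise relative to what actually has to be done in \cite{LV14}: the assertion that $\psi^{12}|_{I_{\fkL}}=\cyc^{a_{\fkL}}$ with $a_{\fkL}\in\{0,6,12\}$ is too coarse---Raynaud's theorem gives exponents depending on the ramification index $e(\fkL/\ell)$, and the reduction to the set $\{0,4,6,8,12\}$ (and then to $\{0,6,12\}$ after further work) requires the integrality argument of Momose combined with bounds on the ramification of $K/\bbq$ at $\ell$. Also, your step ``match it with the mod-$\ell$ reduction of an $\ell$-adic degree-one character $\phi$ associated to a CM elliptic curve $\scrE/K$'' hides the real content of \cite{LV14}: one must show that the algebraic Hecke character you have built is, up to a bounded finite-order twist, the Gr\"ossencharacter of an explicit CM elliptic curve, and this is exactly where Larson--Vaintrob go beyond Momose by carrying out the comparison uniformly in $\ell$. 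Your final paragraph correctly identifies this as the crux, but calling it an ``obstacle'' rather than supplying the argument means the proposal is a plan, not a proof.
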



\begin{prop}\label{Bl}
	Let $K$ be a number field without RCM, $S_{K}$ a set of primes given in Theorem~\ref{LV14}, and $\ell\ge 11$ a prime satisfying the following three conditions;
\begin{enumerate}[\normalfont (i)]
	\item $\ell \not \in S_{K}$,
	\item $\ell$ is unramified in $K$, and
	\item $\bbq\left(\zeta_{\ell}\right) \cap K = \bbq$.
\end{enumerate}
	Then, for any elliptic curve $E$ over $K$ with  a point $R\in E\left[\ell\right]-\left\{O\right\}$ such that $2, 3\nmid \left[K\left(R\right):K\right]$, the followings hold:
\begin{enumerate}[\normalfont (a)]
    \item There is a basis $\calB$ of $E\left[ \ell \right]$ such that $G := G\left(\calB\right)$ is $\left\langle \Delta, U \right\rangle$ where $\Delta := G \cap \Cs$,
	\item  $\Delta$ is  either $\Delta_{1} := \diagexp{\left\langle \left(2\tau,2\tau\right), \left(0,\frac{\ell-1}{2\tau}\right) \right\rangle}$ or $\Delta_{2} := \mathrm{diagexp}\left(\left\langle \left(2\tau,2\tau\right), \right.\right.$ $ \left.\left.\left(\frac{\ell-1}{2\tau},0\right) \right\rangle\right),$ where the constant $\tau$ is defined by $$
		\tau = \begin{cases}1 & \text{ if } \ell \equiv 2 \pmod{3}, \\ 3 & \text{ if } \ell \equiv 1 \pmod{3},\end{cases}
	$$
	\item $\frac{\ell-1}{2\tau}$ divides $\left[ K\left(S\right) : K\right]$ for any $S \in E\left[\ell\right] -\left\{O\right\}$, and
	\item $\ell\equiv 3\pmod{4}$, $\ell \not \equiv 1\pmod{9}$, and $\ell \in \PDI_{2}\left(K\right)$.
\end{enumerate}
\end{prop}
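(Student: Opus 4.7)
The plan is to combine Propositions~\ref{classify} and~\ref{not_Bl} with Theorem~\ref{LV14} to pin down the structure of $G$. First, Proposition~\ref{classify} (applied with $2 \nmid [K(R):K]$) gives a basis $\calB = \{P, Q\}$ in which $G := G(\calB)$ is contained in $\Bl$, $\Ns$, or $\Nns$; Proposition~\ref{not_Bl} (using (ii) and (iii)) rules out $G \subseteq \Nns$ and $G \subseteq \Ns \setminus \Cs$, since in those cases every nonzero $S \in E[\ell]$ would have $2$ or $3$ dividing $[K(S):K]$, contradicting the hypothesis on $R$. So $G \subseteq \Bl$, and the invariant line $\langle P \rangle$ carries the isogeny character $\psi$, with complementary character $\chi := \cyc / \psi$.

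Since $E[\ell]$ is reducible and $\ell \notin S_K$, I apply Theorem~\ref{LV14}. Case~1 would produce a CM elliptic curve $\scrE/K$ with CM field inside $K$; for $\ell > N_K$, the no-RCM hypothesis (read via Momose's Theorem~\ref{M95A}) excludes this. So case~2 holds: $\psi^{12} = \cyc^6$, and symmetrically $\chi^{12} = \cyc^6$. Translating to $\Delta := G \cap \Cs$, every $\diagexp{a, b} \in \Delta$ satisfies $12a \equiv 6(a + b) \pmod{\ell-1}$, equivalently $(\ell-1)/(2\tau) \mid a - b$, placing $\Delta$ inside the subgroup $H_0 \subseteq \Cs$ of order $2\tau(\ell-1)$. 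Condition~(iii) forces $\det : G \twoheadrightarrow \Flx$ to be surjective, so $|\Delta| \geq \ell - 1$.

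The central technical step, and the main obstacle I expect, is to show $U \in G$ (equivalently $\ell \mid |G|$). A direct commutator computation in $\Bl$ shows that either $G$ is simultaneously diagonalizable (so $G \subseteq \Cs$ after a basis change) or $U \in G$. The hard subcase is the former, in which both $\langle P \rangle$ and $\langle Q \rangle$ are Galois-invariant and produce two isogeny characters $\psi_i$ with $\psi_i^{12} = \cyc^6$ and $\psi_1\psi_2 = \cyc$; a contradiction with no-RCM must be extracted from the existence of two independent $K$-rational $\ell$-isogenies on $E$ for $\ell > N_K$. Granting $U \in G$, the condition $2, 3 \nmid [K(R):K]$ pins down $|\psi(\Gal)| = (\ell-1)/(2\tau)$: $\psi^{12} = \cyc^6$ gives $|\psi(\Gal)| = (\ell-1)/(2\tau) \cdot \gcd(12, |\psi(\Gal)|)$, and coprimality with $6$ forces the gcd to be $1$. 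Together with $\Delta \subseteq H_0$ and $|\Delta| \geq \ell - 1$, this pins down $\Delta = \Delta_1$ (or $\Delta_2$, depending on $R$ and the basis choice), proving (a) and (b).

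For (c), any $S \in \langle P \rangle$ has $[K(S):K] = |\psi(\Gal)| = (\ell-1)/(2\tau)$; if $S \notin \langle P \rangle$, a direct stabilizer computation in $\langle \Delta_1, U \rangle$ gives $[K(S):K] = \ell (\ell-1)/d$ where $d := \gcd(2\tau, (\ell-1)/(2\tau)) \mid 2\tau$, so $(\ell-1)/(2\tau) \mid [K(S):K]$ in either case. For (d), $\ell \in \PDI_2(K)$ is immediate from the $K$-rational $\ell$-isogeny $E \to E/\langle P \rangle$ with character $\psi$ satisfying $\psi^{12} = \cyc^6$; and (c) for $S = R$ combined with $2, 3 \nmid [K(R):K]$ forces $(\ell-1)/(2\tau)$ coprime to $6$, which excludes $\ell \equiv 1 \pmod 4$ (else $2 \mid (\ell-1)/(2\tau)$) and $\ell \equiv 1 \pmod 9$ (else $3 \mid (\ell-1)/(2\tau)$ in the $\tau = 3$ case).
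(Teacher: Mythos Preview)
Your reduction to $G\subseteq\Bl$ via Propositions~\ref{classify} and~\ref{not_Bl}, and your invocation of Theorem~\ref{LV14}, match the paper's argument (though Case~1 of Theorem~\ref{LV14} is excluded directly by the definition of RCM---no appeal to Momose or to a constant $N_K$ is needed, and invoking $N_K$ here would in fact be circular, since $N_K$ is defined through this very proposition). The dichotomy ``either $G$ is simultaneously diagonalizable or $U\in G$'' is also correct. The genuine gap is precisely the step you flag: ruling out the diagonalizable subcase $G\subseteq\Cs$.

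Your proposed mechanism---extracting a contradiction with no-RCM from two independent $K$-rational $\ell$-isogenies---does not work as stated. A non-cuspidal point of $X_{\mathrm{split}}(\ell)(K)$ does not force $K$ to have RCM, and no unconditional result excludes such points over a general number field; you would essentially need a Bilu--Parent--Rebolledo theorem over $K$, which is not available. The paper takes a different route: it carries out the congruence and index computations establishing (b), and hence the congruences $\ell\equiv 3\pmod 4$, $\ell\not\equiv 1\pmod 9$ of (d), \emph{uniformly} in the two subcases $G=\Delta$ and $G=\langle\Delta,U\rangle$ (the relevant indices $[G:G_{01}]$, $[G:G_{10}]$, $[G:G_{1k}]$ differ between the subcases only by harmless factors of~$\ell$, so the determination of $\Delta$ from $\psi^{12}=\cyc^6$ and $2,3\nmid[K(R):K]$ goes through identically). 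Only \emph{after} these congruences are in hand does the paper exclude $G=\Delta$: if $\ell\nmid|G|$, then \cite[Theorem~2]{G22} on the inertia image forces either some $\diagexp{0,e}$ or some $\Cns^{e}$ (with $e\in\{1,2,3,4,6\}$) into a conjugate of $G\subseteq\Cs$, and combining this with the congruences from (d) yields $\ell-1\mid 2\tau e$ or $\ell+1\mid e$, contradicting $\ell\ge 11$. So the missing ingredient is the inertia-image input from \cite{G22}, and the key organisational point is that (b) and (d) must be established \emph{before}~(a), not after.
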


\begin{proof}
First, we show that there exists a basis $\calB$ of $E\left[ \ell \right]$ such that $G := G\left(\calB\right)$ is $\Delta$ or $\left\langle \Delta, U \right\rangle$. By Proposition~\ref{pre_main}, there exists a basis  $\calB'$ for $E\left[\ell\right]$ such that $H := G\left(\calB'\right) \subseteq \Bl$. 

If $\ell\nmid \left| H \right|$, then we note that $A\in H$ has only one eigenvalue $1$ if and only if $A=I_2$. Also, since we can show that for any $A,B \in H \subseteq \Bl$, $ABA^{-1}B^{-1}$ has only one eigenvalue~$1$ by direct calculation, $H$  is abelian. Since $\ell\nmid \left| H \right|$ and all matrices in $H$ have rational eigenvalues, all elements of $H$ are simultaneously diagonalizable over $\Fl$. So $T^{-1} H T \subseteq \Cs$ for some $T\in \GLl$, i.e., $G := G\left(\calB\right) \subseteq \Cs$ for some basis $\calB$ for $E\left[ \ell \right]$.

If $\ell~\big|~\left| H \right|$, then $H$ contains a subgroup of order $\ell$ by Cauchy's theorem. Since $\Bl$ has the unique subgroup $\left\langle U\right\rangle$ of order $\ell$,  we have that $U \in H$. Therefore, $\begin{pmatrix}a&0\\0&d\end{pmatrix} = \begin{pmatrix}a&b\\0&d\end{pmatrix} U^{-ba^{-1}}\in H$ for any $\begin{pmatrix}a&b\\0&d\end{pmatrix} \in H$. Hence, for  $G := H$,  we have that  $G= \left\langle G \cap \Cs, U \right\rangle$.

Therefore, we have shown  that for some basis $\calB = \left\{ P,Q \right\}$ for $E\left[ \ell \right]$,
\begin{align}\label{gp-G}
    \text{ the group } G:=G(\calB) \text{ is } \Delta \text{ or } \left\langle \Delta, U \right\rangle.
\end{align}

Next, we prove (b)-(d). To prove (b), first,  we recall the situation so that we can apply Theorem~\ref{LV14} under the assumption (i): The representation $E\left[\ell\right] \otimes \overline{\Fl}$ of $\calG := \Gal\left(K\left(E\left[\ell\right]\right)/K\right)$ has exactly two associated characters $\psi_{1}$ and $\psi_{2}$ of degree $1$. They satisfy that $\rho\left(\sigma\right) = \begin{psmallmatrix} \psi_{1}\left(\sigma\right) & * \\ 0 & \psi_{2}\left(\sigma\right) \end{psmallmatrix}$ 
for all $\sigma \in \calG$ where $\rho = \rho_{\calB}$. Since $K$ has no RCM,  the second case of Theorem~\ref{LV14} holds, which implies that for each  $i=1,2$, $\psi_{i}^{12}\left(\sigma\right) = \cyc^{6}\left(\sigma\right) = \det\left(\rho\left(\sigma\right)\right)^{6}$ for all $\sigma \in \calG$, and equivalently,
\begin{align}\label{cong}
\text{ for all }\diagexp{u,t} \in \Delta,\quad  12 u \equiv 12t \equiv 6\left(u+t\right) \pmod{\ell-1}.
\end{align}
	
	Since $\Flx$ is a cyclic group generated by $\alpha$, $\Delta^{\circ} := \Delta \cap G^{\circ}$ is generated by a single matrix, say $\diagexp{\frac{\ell-1}{m},\frac{1-\ell}{m}}$ for some positive divisor $m$ of $\ell-1$. Next, we claim that there exists $A\in \Delta$ such that $\det\left(A\right)=\alpha$. In fact, since $\bbq\left(\zeta_{\ell}\right) \cap K = \bbq$ by (ii), $\Gal\left( K\left(\zeta_{\ell}\right)/K\right) \cong \Gal\left( \bbq\left(\zeta_{\ell}\right)/\bbq\right)$ and $\cyc\left(\Gal\left( K\left(\zeta_{\ell}\right)\right)/K\right) = \cyc\left(\Gal\left( \bbq\left(\zeta_{\ell}\right)/\bbq\right)\right) =\Flx = \left\langle \alpha\right\rangle$. Hence, there exists $A'=\begin{pmatrix} a&b\\0&d\end{pmatrix}\in G$ such that $\det A' = \alpha$. If $U \in G$, then $A:=A' U^{-ba^{-1}} \in \Delta$ and $\det\left(A\right) = \alpha$. If $U\notin G$, then $A:=A'\in G = \Delta$ by \eqref{gp-G}. So we let $A=\diagexp{1-t,t}\in \Delta$ for some $t\in \bbz/\left(\ell-1\right)\bbz$. Then, since $\diagexp{1-t,t}^{-a-b}\diagexp{a,b} \in \Delta^{\circ} = \diagexp{\left\langle \left(\frac{\ell-1}{m},\frac{1-\ell}{m}\right) \right\rangle}$ for any $\diagexp{a,b} \in \Delta$, we see that $$\Delta = \diagexp{\left\langle \left(\frac{\ell-1}{m},\frac{1-\ell}{m}\right), \left(1-t,t\right) \right\rangle}.$$  Next, we show that $m=1$ and so $\Delta = \diagexp{\left\langle \left(1-t,t\right) \right\rangle}$.
	
	By \eqref{cong} for $\diagexp{\frac{\ell-1}{m},\frac{1-\ell}{m}}$,  $12\frac{\ell-1}{m} \equiv 0 \pmod{\ell-1}$, and equivalently, $m\mid 12$.

	Recalling \eqref{gp-G}, if $G=\Delta$, then we have that $G_{01} = \left\{ \begin{psmallmatrix}a&0\\0&1\end{psmallmatrix}\in G\right\}$, $G_{10} = \left\{ \begin{psmallmatrix}1&0\\0&d\end{psmallmatrix}\in G\right\}$, and $G_{1k} = \left\{  I_{2} \right\}$ for all non-zero $k \in \Fl$. Therefore, we can get $$
		\left[G:G_{01}\right] = \left| \left\{ d\in \Flx: \begin{psmallmatrix}a&0\\0&d\end{psmallmatrix} \in \Delta \text{ for some }a\in \Flx\right\} \right|,
	$$ $$
		\left[G:G_{1k}\right] = \left| G \right|, \text{ and } \left[G:G_{10}\right] = \left| \left\{ a\in \Flx: \begin{psmallmatrix}a&0\\0&d\end{psmallmatrix} \in \Delta \text{ for some }d\in \Flx\right\} \right|.
	$$ By  (ii), we have that $\ell -1 ~\big|~ \left| G \right| = \left[G:G_{1k}\right]$ for each non-zero $k \in \Fl$, so $2\mid \left[G:G_{1k}\right]$. Moreover, since $\diagexp{\frac{\ell-1}{m},\frac{1-\ell}{m}}\in\Delta=G$, $m$ divides $\left[G:G_{01}\right]$ and $\left[G:G_{10}\right]$. Since the degree $\left[K\left(R\right):K\right]$ is equal to one of $\left[G:G_{01}\right]$, $\left[G:G_{01}\right]$, or $\left[G:G_{1k}\right]$ for some non-zero $k \in \Fl$ by Lemma~\ref{easy} and $2\nmid \left[K\left(R\right):K\right]$ by (iii), we conclude that $2 \nmid m$.
If $\ell \equiv 1\pmod{3}$, then since a similar argument shows that $3\nmid m$ since $3\nmid \left[K\left(R\right):K\right]$ by (iii). If $\ell \equiv 2\pmod{3}$, then clearly $3\nmid m$ since $m$ is a divisor of $\ell-1$.
Therefore, since $m\mid 12$ but $2,3\nmid m$, we conclude that $m=1$ and $\Delta = \diagexp{\left\langle \left(1-t,t\right) \right\rangle}$.

	
If $ G=\left\langle \Delta, U \right\rangle$, then $G_{01} = \left\{ \begin{psmallmatrix}a&b\\0&1\end{psmallmatrix} \in G \right\}$ and $G_{1k'} = \left\{ \begin{psmallmatrix}1&k'\left(1-d\right)\\0&d\end{psmallmatrix} \in G \right\}$ for all $k' \in \Fl$. Hence, $$
		\left[G:G_{01}\right] = \left| \left\{ d\in \Flx: \begin{psmallmatrix}a&0\\0&d\end{psmallmatrix} \in \Delta \text{ for some }a\in \Flx\right\} \right| \text{ and }
	$$$$
		\left[G:G_{1k'}\right] = \ell\cdot \left| \left\{ a\in \Flx: \begin{psmallmatrix}a&0\\0&d\end{psmallmatrix} \in \Delta \text{ for some }d\in \Flx\right\} \right|.
	$$ Since $\ell \ge 5$,  we can show that $\Delta = \diagexp{\left\langle \left(1-t,t\right) \right\rangle}$ by the same argument.

	Note that one of $t$ or $1-t$ is even modulo $\ell-1$.
  Suppose that $t$ is even. Recalling the indices $\left[G:G_{1k'}\right]$ and $\left[G:G_{01}\right]$ described in the above, we have that  for all $k' \in \Fl$, $\left[G:G_{1k'}\right]$ is divisible by  $|1-t|=\frac{\ell-1}{\gcd(1-t, \ell-1)}$ which is even since $1-t$ is odd, and $ \left[G:G_{01}\right]=\left|t\right|$. Since the degree $\left[K\left(R\right):K\right]$ is equal to $\left[G:G_{01}\right]$ or $\left[G:G_{1k'}\right]$ for some  $k' \in \Fl$   by Lemma~\ref{easy} again and $2\nmid \left[K\left(R\right):K\right]$ by (iii), we conclude that  $\left[K\left(R\right):K\right]=\left[G:G_{01}\right]=\left|t\right|$, which should not be divisible by $2$ or $3$.

	By \eqref{cong} for $\diagexp{1-t,t} \in G$, we have  $12t \equiv 6\pmod{\ell-1}$, so $6t \equiv 3\pmod{\frac{\ell-1}{2}}$. If $\ell \equiv 1 \pmod{3}$, then $2t \equiv 1\pmod{\frac{\ell-1}{6}}$ so $t \equiv \frac{\ell+5}{12} \pmod{\frac{\ell -1}{6}}$. Therefore, $$
		\gcd\left(t,\ell-1\right) \mid \gcd\left(\ell+5,\ell-1\right) = \gcd\left(6,\ell-1\right) = 6.
	$$ Thus, since $\left| t\right| = \frac{\ell-1}{\gcd\left(t,\ell-1\right)}$, $|t|$ is divisible by $\frac{\ell-1}{6}$ and  $\left| t\right|$ divides $\ell-1$. Moreover, since $2,3 \nmid \left| t \right|$, we have that $\left| t \right| = \frac{\ell-1}{6}$ and $\gcd\left(6,\frac{\ell-1}{6}\right) = 1$. Hence, $\gcd\left(t,\ell-1\right)=6$ and there is an $u\in \left(\bbz/\left(\ell-1\right)\bbz\right)^{\times}$ satisfying $tu\equiv 6 \pmod{\ell-1}$. Moreover, $\Delta = \diagexp{\left\langle \left(1-t,t\right) \right\rangle} = \diagexp{\left\langle \left(u-6,6\right) \right\rangle}$. By \eqref{cong} for $\diagexp{u-6,6} \in G$, we have that $u\equiv 12\pmod{\frac{\ell-1}{6}}$. Since $\gcd\left(u,\ell-1\right) = 1$, we have that $2,3\nmid u$ and that $u \equiv 12 \pm \frac{\ell-1}{6}\pmod{\ell-1}$. Hence $\left(u-6,6\right) = \left(6 \pm \frac{\ell-1}{6},6\right)$ in $\left(\bbz/\left(\ell-1\right)\bbz\right)^2$. Since $\left\langle \left(6 + \frac{\ell-1}{6},6\right) \right\rangle \subseteq \left\langle \left(6,6\right), \left(\frac{\ell-1}{6},0\right) \right\rangle$ and both groups have the same order $\ell-1 = \operatorname{lcm}\left(\frac{\ell-1}{6}, 6\right) $, we have that $$
		\left\langle \left(6 - \frac{\ell-1}{6},6 \right)\right\rangle
		= \left\langle \left(6,6\right), \left(\frac{\ell-1}{6},0\right) \right\rangle
		= \left\langle \left(6 + \frac{\ell-1}{6},6\right) \right\rangle.
	$$ Finally, $\Delta = \diagexp{\left\langle \left(6,6\right),\left(\frac{\ell-1}{6},0\right) \right\rangle}$.
If $\ell\equiv 2 \pmod{3}$, we can show by a similar argument that $\Delta = \diagexp{\left\langle \left(2,2\right),\left(\frac{\ell-1}{2},0\right) \right\rangle}$. Therefore, in either case, $\Delta= \Delta_{2}$ if $t$ is even.  If $1-t$ is even, we can show that $\Delta= \Delta_{1}$ similarly.

For (c),	
we note that $ \diagexp{2\tau, 2\tau}\in \Delta$ by (b), so  $|2\tau|=\frac{\ell-1}{2\tau}$ divides $\left[G:G_{01}\right]$ and $\left[ G: G_{1k'}\right]$ for any $k' \in \Fl$ recalling those indices. Thus,  $\frac{\ell-1}{2\tau}$ divides $\left[ K\left(S\right): K\right]$ for any $S  \in E\left[\ell\right] - \left\{ O \right\}$.

For (d), by (c) and the condition (iii),  $\frac{\ell-1}{2\tau}$ should not be divisible by $2$ or $3$, which implies that $\ell \equiv 3\pmod{4}$ and $\ell \not \equiv 1 \pmod{9}$. Moreover, the isogeny character of $\left(E, \left\langle Q\right\rangle \right) \in Y_{0}\left(\ell\right)$ referring to Theorem~\ref{M95A} is $\psi_{2}$, and by (b), $\psi_{2}$ satisfies $\psi_{2}^{6} = \cyc^{12}$ for  $\calB = \left\{ P, Q\right\}$  in \eqref{gp-G}. Hence $\ell \in \PDI_{2}\left(K\right)$.
	
To complete the proof of  (a), by \eqref{gp-G}, it is enough to show that $U\in G$. Suppose $U \not \in G$. Then by \eqref{gp-G}, $G = \Delta$ and $\ell \nmid \left| G \right|$. Hence, \cite[Theorem~2]{G22} implies that there exist $e \in \left\{ 1,2,3,4,6\right\}$ and $T \in \GLl$ such that either $\diagexp{0,e}\in T G T^{-1}$ or $\Cns^{e}\subseteq T G T^{-1}$. If $\diagexp{0,e} \in T G T^{-1}$, then $e \in \left\langle \frac{\ell-1}{2\tau} \right\rangle$. So $\ell-1 \mid 2\tau e$. Therefore, $\ell-1$ divides $24$ or $36$. Since $\ell-1 \equiv 2\pmod{4}$ and $\ell \not \equiv 1\pmod{9}$ by (d), we have that $\ell = 3$ or $ 7$, which  contradicts that $\ell \ge 11$. If $T G T^{-1}$ contains $\Cns^{e}$, then $$
		\frac{\ell^{2}-1}{e} = | \Cns^{e} | ~\big|~ \left| G \right| = \ell-1,
	$$ so $\ell+1 \mid e$, which contradicts that $\ell \ge 11$ again. This completes the proof.
\end{proof}

\begin{proof}[Proof of Proposition~\ref{woRCM_main}]
	This follows from Proposition~\ref{Bl}, by letting $N_{K}= \max \left(S'_{K}\right)$ where  $S'_{K}:=S_{K}\cup \{\text{a prime } \ell : \ell \text{ is ramified in } K, \text{ or } \bbq(\zeta_{\ell})\cap K\neq \bbq\}$ which is a finite set.
\end{proof}
\begin{remark}
	In Proposition~\ref{Bl}(b), we note that $\Delta_{1}$ and $\Delta_{2}$  do  appear mutually inclusively. More precisely, for an elliptic curve $E/K$ and a basis $\calB =\left\{P,Q\right\}$ of $E\left[ \ell\right]$, if $G := G\left(\calB\right) = \left\langle \Delta, U \right\rangle$,  then we can show there exist an elliptic curve $E'/K$ and a basis $\calB'$ of $E'\left[ \ell\right]$ such that $G' := G\left(\calB'\right) = \left\langle \Delta_{\operatorname{flip}}, U \right\rangle$ where $\Delta_{\operatorname{flip}} := \left\{\diag{d,a} : \diag{a,d} \in \Delta \right\}$ as follows:
 
	Note that $\Delta_{1}$ and $\Delta_{2}$ are the flips of each other. Since the subspace $\left\langle Q \right\rangle \subseteq E\left[\ell\right]$ is $\Gal\left(K\left(E\left[\ell\right]\right)/K\right)$-invariant, there is a $K$-rational isogeny $\alpha: E \to E'$ with kernel $\left\langle Q \right\rangle$. We denote the dual isogeny of $\alpha$ by $\widehat{\alpha}$. Then, $\widehat{\alpha} \circ \alpha = \left[ \ell\right]$ and $\ker \alpha = \left\langle Q \right\rangle$, so the point $Q' := \alpha\left(P\right) \in E'\left[ \ell\right]$ is non-zero and it is in $\ker \widehat{\alpha}$. Since the order of $\widehat{\alpha}$ is $\ell = \# \ker \alpha$, the kernel $\widehat{\alpha}$ is generated by a $Q'$. Since $\# \ker\widehat{\alpha}$ is of order $\ell$, $Q'$ generates $\ker\widehat{\alpha}$. Similarly, for any $P' \in E'\left[\ell\right] - \left\langle Q'\right\rangle$, $\widehat{\alpha}\left(P'\right)$ generates $\ker \alpha$. Replacing $P'$ by a multiple of $P'$ of an appropriate scalar, we may assume that $\widehat{\alpha}\left(P'\right) = Q$. Then, the set $\calB' = \left\{P',Q'\right\}$ is a basis of $E'\left[\ell\right]$. So far, we have shown that the matrix representation of the linear transformations $\alpha : E\left[\ell\right] \to E'\left[\ell\right]$ and $\widehat{\alpha} : E'\left[\ell\right] \to E\left[\ell\right]$ with respect to the bases $\calB$ and $\calB'$, respectively are both equal to  $\begin{psmallmatrix}0&1\\0&0\end{psmallmatrix}$. We may consider $\sigma \in \Gal\left(\overline{K}/K\right)$ as automorphisms on the $\Fl$-vector spaces $E\left[\ell\right]$ and $E'\left[\ell\right]$. We denote  by $\begin{psmallmatrix}a&b\\c&d\end{psmallmatrix}$ and $\begin{psmallmatrix}a'&b'\\c'&d'\end{psmallmatrix}$ the matrix representations of automorphisms $\sigma$ on $E \left[\ell\right]$ and $E' \left[\ell\right]$ with respect to the bases $\calB$ and $\calB'$, respectively. Since $\alpha$ and $\sigma$ commute, we have the following commutative diagram,

	\begin{center}\begin{tikzcd}
		{E\left[\ell\right]} \arrow[dd, "\sigma"'] \arrow[rr, "\alpha"] &                  & {E'\left[\ell\right]} \arrow[dd, "\sigma"] \\
		& \circlearrowleft &                                            \\
		{E\left[\ell\right]} \arrow[rr, "\alpha"]                       &                  & {E'\left[\ell\right]}                     
	\end{tikzcd}
\end{center}
	and	we have that $$
		\begin{pmatrix}0&a\\0&c\end{pmatrix}
		= \begin{pmatrix}a&b\\c&d\end{pmatrix} \begin{pmatrix}0&1\\0&0\end{pmatrix}
		= \begin{pmatrix}0&1\\0&0\end{pmatrix} \begin{pmatrix}a'&b'\\c'&d'\end{pmatrix}
		= \begin{pmatrix}c'&d'\\0&0\end{pmatrix}.
	$$ Thus, $a = d'$ and $c = c' =0$. Replacing $\alpha$ by $\widehat{\alpha}$, the same argument shows that $a' = d$. For $G':= G\left(\calB'\right)$, we see that $G'$ is contained in $\left\langle \Delta_{\operatorname{flip}}, U \right\rangle$. Moreover, as in the end of  the proof of Proposition~\ref{Bl} we can show that $U \in G'$. Therefore, we conclude that $G' = \left\langle \Delta_{\operatorname{flip}}, U \right\rangle$.
\end{remark}

\section{The proofs  of the main theorems}\label{main_proof}

In this section, we give the proofs of our main theorems, Theorem~\ref{quad_main} and Theorem~\ref{main}. We will give a prime $p_K$ depending on $K$, more specifically, depending on $N_K$ given in Proposition~\ref{woRCM_main}, $\calR\left(K\right)$ given in \eqref{setR}, and Merel's bound on the torsion points over $K$ (\cite{Merel}), and we prove Theorem~\ref{quad_main} by dividing it into two cases; for primes $\ell \leq p_K$ and for primes $\ell >p_K$. For the latter case, we prove that under the assumption of the degree of the extension $L$ over $K$, the $\ell$-torsion subgroups over both $K$ and $L$ are the same as the trivial group by applying Proposition~\ref{woRCM_main}, and for the former case, we prove that  $\ell^{\infty}$-torsion parts of the elliptic curves, upon base change, do not grow by applying Proposition~\ref{small_prime} after proving it below.

We recall Merel's theorem (\cite{Merel}) which gives an uniform upper bound of orders of torsion points  over $K$ of an elliptic curve $E/K$, and \cite[Lemma~3.2]{IK22} which gives an equivalent condition for the $N$-torsion subgroup of $E\left(K\right)$ to contain a certain type of subgroups.

\begin{thm}[{\cite{Merel}}]\label{Merel}
	Let $K$ be a number field. There is a positive integral constant $M_{K}$ satisfying ${E\left(K\right)}_{\tors} = E\left(K\right)\left[M_{K}\right]$ for all elliptic curves $E/K$.
\end{thm}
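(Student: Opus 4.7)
The plan is to derive the statement as an immediate corollary of Merel's uniform boundedness theorem. What is cited here as \cite{Merel} is the theorem that for every positive integer $d$, there exists a constant $B(d)$ depending only on $d$ such that for every number field $K$ with $\left[K:\bbq\right]=d$ and every elliptic curve $E/K$, every point $P\in E\left(K\right)_{\tors}$ has $\operatorname{ord}(P)\le B(d)$. The statement in the excerpt is simply a repackaging of this pointwise bound into a single integer that annihilates the entire torsion subgroup.

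Granted Merel's input, the argument is essentially one line. Set $d=\left[K:\bbq\right]$, let $B=B(d)$, and define $M_{K}:=\operatorname{lcm}(1,2,\ldots,B)$; any common multiple of $\{1,\ldots,B\}$ works equally well, for instance the cruder choice $B!$. By Merel's bound, the order of every torsion point of $E\left(K\right)$ lies in $\{1,\ldots,B\}$ and therefore divides $M_{K}$, so $E\left(K\right)_{\tors}\subseteq E\left(K\right)\left[M_{K}\right]$. The reverse inclusion $E\left(K\right)\left[M_{K}\right]\subseteq E\left(K\right)_{\tors}$ is immediate from the definition of the $M_{K}$-torsion subgroup. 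Combining the two inclusions gives the claimed equality. Note also that $M_{K}$ can be taken to depend only on $d=\left[K:\bbq\right]$, not on $K$ itself.

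The only genuine obstacle is of course Merel's theorem, whose proof rests on delicate analysis of the modular curves $X_{1}(N)$ together with bounds on Eisenstein-type modular symbols; it is applied here purely as a black box, as in \cite{G22} and \cite{GJN20}. No further input is needed: the reformulation in terms of $E\left(K\right)\left[M_{K}\right]$ is exactly what is convenient for the subsequent arguments in Section~\ref{main_proof}, where one wants to control the full $\ell^{\infty}$-torsion for all primes $\ell\le p_{K}$ at once, rather than individual point orders.
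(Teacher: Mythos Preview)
Your proposal is correct and matches the paper's approach: the paper does not prove this theorem at all but simply cites it from \cite{Merel} as a black box, and your observation that the formulation in terms of a single annihilating integer $M_{K}$ follows immediately from Merel's pointwise order bound via $M_{K}=\operatorname{lcm}(1,\ldots,B(d))$ (or $B(d)!$) is exactly the trivial repackaging implicit in the citation.
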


\begin{defn}[the Merel constant]\label{Merelconstant}
	For a number field $K$, we let $M\left(K\right)$ be the smallest positive  constant  among $M_K$ given in Theorem~\ref{Merel}, and call it  the Merel constant.
\end{defn}

We start by proving the following basic lemma.

\begin{lemma}\label{l_part}
	For two abelian groups $A \subseteq B$ and a prime $\ell$, if $B[\ell^{n'}] = A[\ell^{n'}] = A\left[\ell^{n}\right]$ for some non-negative integers $n'$ and $n$ such that $n'>n$, then $B\left[\ell^{\infty}\right] = A\left[\ell^{\infty}\right]$.
\end{lemma}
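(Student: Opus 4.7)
The plan is to reduce the statement to an equality of finite truncations $B[\ell^k]=A[\ell^n]$ for every $k\ge n$, and then take the union over $k$. Clearly $A[\ell^\infty]\subseteq B[\ell^\infty]$, so the whole content is the reverse inclusion.

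The starting observation is that the hypothesis $B[\ell^{n'}]=A[\ell^n]$ forces the group $B[\ell^{n'}]$ to be annihilated by $\ell^n$, i.e.\ $B[\ell^{n'}]\subseteq B[\ell^n]$. Combined with the trivial inclusion $B[\ell^n]\subseteq B[\ell^{n'}]$ (using $n<n'$), this gives the key identity $B[\ell^n]=B[\ell^{n'}]$. This is the only place where the strict inequality $n<n'$ gets used, and it is the heart of the argument.

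From here I would proceed by induction on $k\ge n$ to show that $B[\ell^k]=B[\ell^n]$. The inductive step takes any $b\in B[\ell^{k+1}]$ and notes that $\ell b\in B[\ell^k]=B[\ell^n]$ by the hypothesis, so $\ell^{n+1}b=0$, meaning $b\in B[\ell^{n+1}]$; since $n+1\le n'$, we get $b\in B[\ell^{n'}]=B[\ell^n]$. The reverse inclusion is automatic.

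Finally, $B[\ell^\infty]=\bigcup_{k\ge n}B[\ell^k]=B[\ell^n]=A[\ell^n]\subseteq A[\ell^\infty]$, which closes the loop. The only mild subtlety is making sure the index bookkeeping ($n+1\le n'$) is used correctly; there are no obstacles beyond that, since everything is elementary group theory once the collapse $B[\ell^n]=B[\ell^{n'}]$ has been extracted from the hypothesis.
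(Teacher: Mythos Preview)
Your proof is correct and follows essentially the same route as the paper: both arguments reduce the claim to showing $B[\ell^{\infty}]=B[\ell^{n}]$, and both hinge on the collapse $B[\ell^{n+1}]\subseteq B[\ell^{n}]$ extracted from the hypothesis $B[\ell^{n'}]=A[\ell^{n}]$. The only cosmetic difference is that you organize the step as an induction on $k$, while the paper phrases it as a contradiction via an element of minimal $\ell$-power order outside $B[\ell^{n}]$.
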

\begin{proof}
	Since $$
	B\left[\ell^{n}\right]
	= \left(B[\ell^{n'}]\right)\left[\ell^{n}\right]
	= \left(A[\ell^{n'}]\right)\left[\ell^{n}\right]
	\subseteq A\left[\ell^{\infty}\right],
	$$ it is enough to show that $B\left[\ell^{\infty}\right] = B\left[\ell^{n}\right]$. If $B\left[\ell^{\infty}\right] \supsetneq B\left[\ell^{n}\right]$, then there exists $b \in B\left[\ell^{\infty}\right] -B\left[\ell^{n}\right]$ and if $m$ is the smallest non-negative integer $m$ such that $\ell^{m}b=0$ then  $m>n$. Hence, $$
		\ell^{m-n-1} b \in B\left[\ell^{n+1}\right] = \left(B[\ell^{n'}]\right)\left[\ell^{n+1}\right] = \left(A[\ell^{n'}]\right)\left[\ell^{n+1}\right] = \left(A\left[\ell^{n}\right]\right)\left[\ell^{n+1}\right] = A\left[\ell^{n}\right],
	$$ thus, $\ell^{m-1} b=0$, which contradicts the minimality of $m$.
\end{proof}

Now we give a sufficient condition on the extension degree over $K$ over which a given elliptic curve $E/K$ has the growth of $\ell^{\infty}$-torsion subgroups for finitely many primes $\ell$, simultaneously. We restate the following lemma which will be used to establish it.

\begin{lemma}[{\cite[Lemma~3.2]{IK22}}]\label{lem-ours}
	Let $K$ be a number field and $E/K$ be an elliptic curve over $K$. For positive integers $m$, $n$, and $N$ satisfying $m\mid n\mid N$,
$${E\left(K\right)}\left[N\right] \supseteq \bbz/m\bbz \times \bbz/n\bbz \text{ if and only if for some basis } \calB \text{ of } E\left[N\right], G\left(\calB\right) \text{ is contained in }$$
$$		\left\{\begin{pmatrix}a&b\\c&d\end{pmatrix}\in \GL{\bbz/N\bbz}: a\equiv 1,  b\equiv 0 \pmod {m},\text{ and } c\equiv 0, d\equiv 1 \pmod n\right\}.
	$$
\end{lemma}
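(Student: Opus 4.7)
My plan is to set up a translation between the matrix condition on $G(\calB)$ and the $K$-rationality of the two points $(N/m)P$ and $(N/n)Q$ for a basis $\calB=\{P,Q\}$ of $E[N]$, and then read both implications off that dictionary. Writing $\rho_{\calB}(\sigma)=\begin{psmallmatrix}a&b\\c&d\end{psmallmatrix}$ so that $P^{\sigma}=aP+bQ$ and $Q^{\sigma}=cP+dQ$ (consistent with the right-action-on-row-vectors convention used in Lemma~\ref{easy}), the vanishing of $(N/m)P^{\sigma}-(N/m)P=(N/m)(a-1)P+(N/m)bQ$ in $E[N]$ is equivalent to $a\equiv 1$ and $b\equiv 0\pmod m$; symmetrically, $(N/n)Q$ is fixed by $\sigma$ iff $c\equiv 0$ and $d\equiv 1\pmod n$. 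Hence the matrix condition on $G(\calB)$ is equivalent to $(N/m)P,(N/n)Q\in E(K)$.

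The $\Leftarrow$ direction is then immediate: if $G(\calB)$ sits in the prescribed set, the two fixed points $(N/m)P$ and $(N/n)Q$ have orders $m$ and $n$, and because $\{P,Q\}$ is a $\bbz/N\bbz$-basis of $E[N]$ the cyclic groups $\langle(N/m)P\rangle$ and $\langle(N/n)Q\rangle$ meet trivially, producing an internal direct sum $\cong\bbz/m\bbz\times\bbz/n\bbz$ inside $E(K)[N]$.

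For the $\Rightarrow$ direction, the real content is the construction of the basis: given $H\subseteq E(K)[N]$ with $H\cong\bbz/m\bbz\times\bbz/n\bbz$, exhibit $\{P,Q\}$ with $H=\langle(N/m)P,(N/n)Q\rangle$, after which the matrix condition drops out of the first paragraph. My plan is to reduce by CRT to prime-power modulus $N=p^{e}$, $m=p^{a}$, $n=p^{b}$ with $a\le b\le e$, and then handle the prime-power case by hand. Take generators $R,S$ of the $p$-component of $H$ of orders $p^{b},p^{a}$; write $R$ in an arbitrary basis of $E[p^{e}]$, factor out the common $p$-power $p^{e-b}$ from its coefficients, and complete the resulting primitive vector to a new basis $\{Q,Q_{0}'\}$ with $R=p^{e-b}Q$. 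Expand $S$ in this basis and kill the $Q$-coefficient by subtracting an appropriate multiple of $R$: the reduced residue must vanish, since it is simultaneously a multiple of $p^{e-a}$ and lies in $[0,p^{e-b})$, while $a\le b$ gives $p^{e-a}\ge p^{e-b}$. Finally, rescale $Q_{0}'$ by the $p$-adic unit that turns the surviving coefficient of $S$ into $p^{e-a}$; the rescaled vector is the desired $P$. Re-assembling across primes via CRT yields a basis of $E[N]$ with the required property.

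The main obstacle is precisely this last structure-theoretic step, since $\bbz/N\bbz$ fails to be a PID for composite $N$ and Smith normal form on $(\bbz/N\bbz)^{2}$ is not immediately available; the CRT reduction to the local rings $\bbz/p^{e}\bbz$ is what legitimizes the elementary-divisor manipulation above. Everything else is bookkeeping with unimodular basis changes of $E[N]$.
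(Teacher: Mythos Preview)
The paper does not supply its own proof of this lemma; it is quoted from the authors' companion preprint \cite{IK22} and used as a black box in the proof of Proposition~\ref{small_prime}. There is therefore nothing in the present paper to compare your argument against.

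That said, your proof is correct and self-contained. The dictionary in your first paragraph matches the paper's convention for $\rho_{\calB}$ exactly, and the $\Leftarrow$ direction follows immediately as you say. For $\Rightarrow$, the CRT reduction to $N=p^{e}$ is the right move, and your explicit basis manipulation over $\bbz/p^{e}\bbz$ is sound: the key observation---that the $Q$-coefficient of $S$, being a multiple of $p^{e-a}$ since $p^{a}S=O$, is automatically a multiple of $p^{e-b}$ when $a\le b$ and can therefore be killed by subtracting a multiple of $R=p^{e-b}Q$---is exactly right. After that subtraction the order count $|H|=p^{a+b}=|R|\cdot|S'|$ (using that $\langle R\rangle\cap\langle S'\rangle=0$ since $Q$ and $Q_{0}'$ are independent) forces the surviving $Q_{0}'$-coefficient of $S'$ to have $p$-valuation exactly $e-a$, so your final unit rescaling is legitimate and $\{P,Q\}$ is a genuine basis of $E[p^{e}]$.
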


\begin{prop}\label{small_prime}
	Let $K$ be a number field, $E/K$ an elliptic curve over $K$, and $p$ a prime which is greater than or equal to the maximal prime divisor of the Merel constant~$M\left(K\right)$. Let $d$ be a positive integer whose minimal prime divisor is greater than $p$. Then, for any extension $L/K$ with $[L:K]=d$ and for any prime $\ell \le p$, $$E\left(L\right)\left[\ell^{\infty}\right] = E\left(K\right)\left[\ell^{\infty}\right].$$
\end{prop}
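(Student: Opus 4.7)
The plan is to prove the sharper assertion $E\left(K\right)\left[\ell^n\right] = E\left(L\right)\left[\ell^n\right]$ for every prime $\ell\le p$ and every $n\ge 1$; the proposition then follows by taking the union over $n$. The key observation is an index computation: the index $\left[K\left(E\left[\ell^n\right]\right)\cap L : K\right]$ is forced simultaneously to have every prime divisor greater than $p$ and every prime divisor in $\left\{\ell\right\}\cup\left\{\text{primes of }\ell-1\right\}\cup\left\{\text{primes of }\ell+1\right\}$; these collections will turn out to be disjoint, once one checks $p\ge 3$, so the index must equal $1$.

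In detail, fix a basis $\calB$ of $E\left[\ell^n\right]$ and set $G_K = \rho_{\calB}\left(\Gal\left(K\left(E\left[\ell^n\right]\right)/K\right)\right)$, regarded as a subgroup of $\GL{\bbz/\ell^n\bbz}$. Let $G_L\subseteq G_K$ be the image of $\Gal\left(L\left(E\left[\ell^n\right]\right)/L\right)$ via restriction to $K\left(E\left[\ell^n\right]\right)$; standard Galois theory (using $L\left(E\left[\ell^n\right]\right) = L\cdot K\left(E\left[\ell^n\right]\right)$) identifies $G_L$ with the subgroup corresponding to $\Gal\left(K\left(E\left[\ell^n\right]\right)/K\left(E\left[\ell^n\right]\right)\cap L\right)$, so
\[
\left[G_K:G_L\right] \;=\; \left[K\left(E\left[\ell^n\right]\right)\cap L : K\right] \;\bigm|\; \left[L:K\right] = d.
\]
In particular every prime divisor of $\left[G_K:G_L\right]$ exceeds $p$. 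On the other hand, $\left|G_K\right|$ divides $\left|\GL{\bbz/\ell^n\bbz}\right| = \ell^{4n-3}\left(\ell-1\right)^{2}\left(\ell+1\right)$, confining every prime divisor of $\left[G_K:G_L\right]$ to $\left\{\ell\right\}\cup\left\{\text{primes of }\ell-1\right\}\cup\left\{\text{primes of }\ell+1\right\}$.

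Each such candidate prime is in fact at most $p$: indeed $\ell\le p$; primes dividing $\ell-1$ are at most $\ell-1<p$; and for a prime $q\mid\ell+1$ we have $q\le\ell+1\le p+1$, while $q=p+1$ would require $p+1$ to be prime. But $p\ge 3$ always, because $3\mid M\left(K\right)$ (any elliptic curve over $\bbq$ with a rational $3$-torsion point base-changes to one over $K$ still carrying that point, so the exponent of $E\left(K\right)_{\tors}$ is divisible by $3$ for such an $E$). Hence $p$ is odd and $p+1\ge 4$ is even, hence composite, ruling out $q=p+1$. Consequently $\left[G_K:G_L\right]=1$, so $G_K=G_L$ and
\[
E\left(K\right)\left[\ell^n\right] \;=\; E\left[\ell^n\right]^{G_K} \;=\; E\left[\ell^n\right]^{G_L} \;=\; E\left(L\right)\left[\ell^n\right].
\]
Taking the union over $n$ finishes the proof. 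The only subtle point is the parity observation that $p+1$ is composite for any odd prime $p$, which together with $\ell\le p$ closes the argument; once that is in hand the index is trivial and everything else is bookkeeping (one could alternatively invoke Lemma~\ref{l_part} once the equality is established for one sufficiently large $n$).
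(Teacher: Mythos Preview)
Your proof is correct and rests on the same core idea as the paper's: every prime divisor of $[K(E[\,\cdot\,]):K]$ is at most $p$, while every prime divisor of $[L:K]$ exceeds $p$, so the two extensions are linearly disjoint over $K$ and the Galois groups coincide. The paper organizes this slightly differently---it works once with the single modulus $N = M(K)\prod_{\ell\le p}\ell$, then invokes Lemma~\ref{lem-ours} and Lemma~\ref{l_part} to pass from $E(L)[N]=E(K)[N]$ to the $\ell^\infty$-parts---whereas you iterate directly over all levels $\ell^n$ and read off the torsion as the $G$-fixed points, which bypasses both auxiliary lemmas. You also make explicit the small parity observation (that $p\ge 3$, hence $p+1$ is composite) needed to bound the prime divisors of $\ell+1$; the paper asserts ``every prime divisor of $\left|\GL{\bbz/N\bbz}\right|$ is less than or equal to $p$'' without spelling this out.
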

\begin{proof}
	Let $N := M\left(K\right){\displaystyle \prod_{\text{a prime }\ell \le p}}\ell$. The extension degree $\left[K\left(E\left[N\right]\right):K\right]$ divides the order $\left| \GL{\bbz/N\bbz} \right|$. By the Chinese remainder theorem,  $\left| \GL{\bbz/N\bbz} \right| = \prod_{i} \left| \GL{\bbz/\ell_{i}^{e_{i}}\bbz} \right|$ where $\prod_{i} \ell_{i}^{e_{i}}$ is a prime factorization of $N$. For any prime $q$ and any positive integer $e$, the natural projection $\pi : \GL{\bbz/q^{e}\bbz} \to \GL{\bbf_{q}}$ is surjective. Thus, $$
		\left| \GL{\bbz/q^{e}\bbz} \right|
		= \left| \GL{\bbf_{q}} \right| \left| \ker \pi \right|
		= \left(q^{2}-1\right)\left(q^{2}-q\right) q^{4e-4}
		= q^{4e} \left(1-q^{-2}\right)\left(1-q^{-1}\right),
	$$ and so we have that $$
		\left| \GL{\bbz/N\bbz} \right| = N^{4} {\displaystyle \prod_{\text{a prime }\ell \le p}} \left(1-\ell^{-2}\right)\left(1-\ell^{-1}\right).
	$$ We note that every prime divisor of $\left| \GL{\bbz/N\bbz} \right|$ is less than or equal to $p$. Therefore,  $\left[K\left(E\left[N\right]\right):K\right]$ and $\left[L:K\right]$ are relatively prime, so $\Gal\left(L\left(E\left[N\right]\right)/L\right) \cong \Gal\left(K\left(E\left[N\right]\right)/K\right)$ since $K\left(E\left[N\right]\right) \cap L = K$. For any positive divisors $m$ and $n$ of $N$ such that $m \mid n$, we have that $E\left(L\right)\left[N\right] \supseteq \bbz/m\bbz \times \bbz/n\bbz$ if and only if $E\left(K\right)\left[N\right] \supseteq \bbz/m\bbz \times \bbz/n\bbz$ by Lemma~\ref{lem-ours}, i.e., $E\left(L\right)\left[N\right] = E\left(K\right)\left[N\right]$. Then,  this implies that $ E\left(K\right)\left[M\left(K\right)\right]=E\left(K\right)\left[N\right]= E\left(L\right)\left[N\right]$ recalling that the Merel constant $M\left(K\right)$ divides $N$. Since ${E\left(K\right)}_{\tors} \subseteq {E\left(L\right)}_{\tors}$, Lemma~\ref{l_part} completes the proof.
\end{proof}

\subsection{The proofs of our main theorems}


Finally, we prove Theorem~\ref{main} and Theorem~\ref{quad_main}.

\begin{proof}[Proof of Theorem~\ref{main}]
If $\calR\left(K\right)$ in \eqref{setR} is finite, we let $$
	p_{K}= \max \left(\calR\left(K\right)\cup\left\{\text{a  prime } p : p \mid  M\left(K\right)\cdot N_K\right\}\right),
$$ where $N_K$ is given in Proposition~\ref{woRCM_main}.

Let $d$ be a positive integer whose minimal prime divisor is greater than $p_K$ and let $L$ be an extension of $K$ with $[L:K]=d$.

For any prime $\ell \le p_{K}$,  Proposition~\ref{small_prime} implies that $E\left(L\right)\left[\ell^{\infty}\right] = E\left(K\right)\left[\ell^{\infty}\right]$.

For any prime $\ell > p_{K}$,   we note that $p_K\ge 7$, since $7 \mid M(\bbq) \mid M\left(K\right)$ by Mazur's classification of torsion subgroups over $\bbq$ (\cite[Theorem~2]{M78}), so $\ell\geq 11$. 
Also, since $p_{K} \ge 7$, we know that $2,3\nmid [L:K]$. If $\left[K\left(S\right):K\right]$ is divisible by $2$ or $3$ for any point $S \in E\left[\ell\right] -\left\{O\right\}$, then $E\left(L\right)\left[\ell\right] = \left\{O\right\}$  since $2,3\nmid\left[L:K\right]$.
If there exists a point $R \in E\left[\ell\right] -\left\{O\right\}$ such that $2,3 \nmid \left[K\left(R\right):K\right]$, then Proposition~\ref{woRCM_main} implies that there exists a prime $q \in \calR\left(K\right)$ which divides $\left[ K\left(T\right) : K\right]$ for all $T  \in E\left[\ell\right] -\left\{O\right\}$.  Since the minimal prime divisor of $\left[L:K\right]$ is greater than $p_{K}$ and $p_{K} \ge q$ from our choice of $p_K$,  we conclude that $E\left(L\right)\left[\ell\right] = \left\{O\right\}$. So in either case, we have shown that $E\left(K\right)[\ell]=\{O\}=E\left(L\right)[\ell]$ for $\ell>p_K$. Hence, 
$E\left(K\right)_{\tors}=E\left(L\right)_{\tors}$.
\end{proof}

Theorem~\ref{main} implies Theorem~\ref{quad_main}.

\begin{proof}[Proof of Theorem~\ref{quad_main}]
	\cite[Theorem~4]{M95} implies that if $\calK$ is a quadratic field without RCM, the set $\PDI_{2}\left(\calK\right)$ is finite, and so is $\calR\left(\calK\right)$. Hence, it follows from Theorem~\ref{main}.
\end{proof}

Moreover, our results imply Genao's result \cite[Theorem~3]{G22} as well.

\begin{cor}[{\cite[Theorem~3]{G22}}]\label{G22_2}
	Let $K$ be a number field without RCM. Assuming GRH, the answer to Question~\ref{quest_org} is affirmative.
\end{cor}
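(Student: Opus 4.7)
The plan is to reduce Corollary~\ref{G22_2} to Theorem~\ref{main} by showing that, under the stated hypotheses, the set $\calR\left(K\right)$ defined in \eqref{setR} is finite. Since $\calR\left(K\right)$ consists of the prime divisors of numbers of the form $\ell-1$ with $\ell\in \PDI_{2}\left(K\right)$ satisfying an extra congruence modulo $36$, a sufficient condition is that $\PDI_{2}\left(K\right)$ itself be finite, because then $\calR\left(K\right)$ is contained in the (finite) union of prime divisors of finitely many integers. Noting the containment $\PDI_{2}\left(K\right)\subseteq \PDI\left(K\right)$ from \eqref{setP}, it is enough to establish the finiteness of $\PDI\left(K\right)$.

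At this point I would invoke the conditional statement recorded in the Remark following Momose's Theorem~B: assuming GRH, $\PDI\left(K\right)$ is infinite if and only if $K$ contains the Hilbert class field of some imaginary quadratic field. Thus under GRH the finiteness of $\PDI\left(K\right)$ becomes equivalent to the assertion that $K$ contains no such Hilbert class field, and this is precisely what the absence of RCM guarantees: if $K$ contained the Hilbert class field $H$ of an imaginary quadratic field $F$, then a CM elliptic curve $\mathscr{E}/H$ with endomorphism ring the ring of integers of $F$ would be defined over $K$ with its CM rational over $K$ (since $F\subseteq H\subseteq K$), which contradicts the no-RCM hypothesis. Conversely the no-RCM hypothesis rules this out by definition.

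Once the finiteness of $\PDI\left(K\right)$ is in hand, the chain $\PDI_{2}\left(K\right)\subseteq \PDI\left(K\right)$ gives the finiteness of $\PDI_{2}\left(K\right)$, and thus the finiteness of $\calR\left(K\right)$. An application of Theorem~\ref{main} then yields that $K$ satisfies $\mathbf{Property}~\calP\left(K\right)$, which is exactly the affirmative answer to Question~\ref{quest_org}.

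The argument is essentially a short chain of implications, so there is no genuinely hard step; the only point that requires care is the equivalence between the no-RCM condition on $K$ and the non-containment of any Hilbert class field of an imaginary quadratic field inside $K$. This reduces to the classical theory of complex multiplication (the $j$-invariants of CM elliptic curves generate Hilbert class fields), but since the paper already emphasizes this connection in the Remark attached to Momose's Theorem~B, one may simply cite it rather than re-prove it.
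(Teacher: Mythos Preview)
Your proposal is correct and follows essentially the same route as the paper's proof: both reduce to Theorem~\ref{main} by establishing the finiteness of $\calR\left(K\right)$ via the finiteness of $\PDI_{2}\left(K\right)$ under GRH, the latter being a consequence of \cite[Remark~8]{M95}. The paper's proof is simply a more compressed version of your argument---it cites Momose's remark directly for the finiteness of $\PDI_{2}\left(K\right)$, whereas you (correctly) unpack the intermediate step through $\PDI\left(K\right)$ and the Hilbert class field characterization of RCM.
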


\begin{proof}
	Under GRH, the set $\PDI_{2}\left(K\right)$ is finite (see \cite[Remark~8]{M95}), and so is $\calR\left(K\right)$. Hence, it follows from Theorem~\ref{main}.
\end{proof}

\begin{remark}
As mentioned in Remark~\ref{PDI-P} and Remark~\ref{PDI-PDI2}, the finiteness of $\PDI_{2}\left(K\right)$ is an essential condition for obtaining our results and concerning Question~\ref{quest_famous}. On the other hand,  we can see  in the proofs of Theorem~\ref{quad_main} and Corollary~\ref{G22_2} that the finiteness of $\PDI_{2}\left(K\right)$ implies the finiteness of $\calR\left(K\right)$. But our final remark is that the converse is unknown to be true.
\end{remark}

\end{document}